\def\k{\kern .5em}
\def\er{\kern .2em}
\begin{document}
\author{}

\newcommand{\be}{\begin{equation}}
\newcommand{\ee}{\end{equation}}
\newcommand{\ba}{\begin{array}}
\newcommand{\ea}{\end{array}}
\newcommand{\beas}{\begin{eqnarray*}}
\newcommand{\eeas}{\end{eqnarray*}}
\newcommand{\bea}{\begin{eqnarray}}
\newcommand{\eea}{\end{eqnarray}}
\newcommand{\ome}{\Omega}

\newtheorem{theorem}{Theorem}[section]
\newtheorem{lemma}{Lemma}[section]
\newtheorem{remark}{Remark}[section]
\newtheorem{proposition}{Proposition}[section]
\newtheorem{definition}{Definition}[section]
\newtheorem{corollary}{Corollary}[section]

\newcommand{\tabincell}[2]{\begin{tabular}{@{}#1@{}}#2\end{tabular}}

\newtheorem{theo}{Theorem}[section]
\newtheorem{lemm}{Lemma}[section]
\newcommand{\blem}{\begin{lemma}}
\newcommand{\elem}{\end{lemma}}
\newcommand{\bthe}{\begin{theorem}}
\newcommand{\ethe}{\end{theorem}}
\newtheorem{prop}{Proposition}[section]
\newcommand{\bprop}{\begin{proposition}}
\newcommand{\eprop}{\end{proposition}}
\newtheorem{defi}{Definition}[section]
\newtheorem{coro}{Corollary}[section]
\newtheorem{algo}{Algorithm}[section]
\newtheorem{rema}{Remark}[section]
\newtheorem{property}{Property}[section]
\newtheorem{assu}{Assumption}[section]
\newtheorem{exam}{Example}[section]

\renewcommand{\theequation}{\arabic{section}.\arabic{equation}}
\renewcommand{\thetheorem}{\arabic{section}.\arabic{theorem}}
\renewcommand{\thelemma}{\arabic{section}.\arabic{lemma}}
\renewcommand{\theproposition}{\arabic{section}.\arabic{proposition}}
\renewcommand{\thedefinition}{\arabic{section}.\arabic{definition}}
\renewcommand{\thecorollary}{\arabic{section}.\arabic{corollary}}
\renewcommand{\thealgorithm}{\arabic{section}.\arabic{algorithm}}
\newcommand{\lan}{\langle}
\newcommand{\curl}{{\bf curl \;}}
\newcommand{\rot}{{\rm curl}}
\newcommand{\grad}{{\bf grad \;}}
\newcommand{\dvg}{{\rm div \,}}
\newcommand{\ran}{\rangle}
\newcommand{\bR}{\mbox{\bf R}}
\newcommand{\bRn}{{\bf R}^3}
\newcommand{\Coinf}{C_0^{\infty}}
\newcommand{\disp}{\displaystyle}
\newcommand{\ra}{\rightarrow}
\newcommand{\Ra}{\Rightarrow}
\newcommand{\ud}{u_{\delta}}
\newcommand{\Ed}{E_{\delta}}
\newcommand{\Hd}{H_{\delta}}
\newcommand\varep{\varepsilon}
%\theoremstyle{plain}\newtheorem{theo}{Theorem}

%%%%%%%%%%%%%%%% 以下开始 your paper 正文 %%%%%%%%%%%%%%%%%%%

 \title{A Decoupling Two-grid Method for the Time-dependent Poisson-Nernst-Planck Equations}

\author{ Ruigang Shen$^1$
 \and Shi Shu$^2$
 \and Ying Yang$^{3,*}$
 \and Benzhuo Lu$^4$ }
 \footnotetext[1]
 { School of  Mathematics and Computational Science, Xiangtan University, Xiangtan  411105, Hunan, P.R. China. E-mail: rgshen@hotmail.com }
 \footnotetext[2]
 { Hunan Key Laboratory for Computation and Simulation in Science and Engineering, Xiangtan University, Xiangtan 411105, Hunan, P.R. China.
 E-mail: shushi@xtu.edu.cn  }
 \footnotetext[3]
 {$^{,*}$\textbf{Corresponding author.} School of  Mathematics and Computational Science, Guangxi Colleges and Universities Key Laboratory of Data Analysis and Computation, Guangxi Key Laboratory of Cryptography and information Security, Guilin University of Electronic Technology, Guilin 541004, Guangxi, P.R. China. E-mail: yangying@lsec.cc.ac.cn }
 \footnotetext[4]
 {Institute of Computational Mathematics and Scientific/Engineering Computing, the National Center for Mathematics and Interdisciplinary Sciences, Academy of Mathematics and Systems Science, Chinese Academy of Sciences, Beijing 100190, P.R. China. E-mail: bzlu@lsec.cc.ac.cn}

\date{}
\maketitle
%\pagenumbering{arabic}

%\begin{abstract}
% ********
%\end{abstract}

\noindent {\bf Abstract}
We study a two-grid strategy for decoupling the time-dependent Poisson-Nernst-Planck equations describing the mass concentration of ions and the electrostatic potential. The computational system is decoupled to smaller systems by using coarse space solutions  at each time level, which can speed up the solution process compared with the finite element method combined with the Gummel iteration. We derive the optimal error estimates in $L^2$ norm for both semi- and fully discrete finite element approximations. Based on the a priori error estimates, the error estimates in $H^1$ norm are presented for the two-grid algorithm. The theoretical results indicate this decoupling method can retain the same accuracy as the finite element method. Numerical experiments including the Poisson-Nernst-Planck equations for an ion channel show the efficiency and effectiveness of the decoupling two-grid method.\\

 %We first derive the optimal error estimate in $L^2$ norm for the time-dependent Poisson-Nernst-Planck equations describing the mass concentration of ions and the electrostatic potential. Then we study a two-grid strategy for decoupling the time-dependent Poisson-Nernst-Planck equations. The computational system is decoupled to smaller systems by using coarse space solutions  at each time level, which can speed up the solution process compared with the finite element method combined with the Gummel iteration. Several error estimates are derived for semi-discretization as well as full discretization schemes. Both theoretical analysis and numerical experiments show the efficiency and effectiveness of the method. \\

 \noindent {\bf Keywords} Poisson-Nernst-Planck equations $\cdot$ Decoupling method $\cdot$ Two-grid method $\cdot$ Semi-discretization $\cdot$ Full discretization $\cdot$ Optimal error estimate $\cdot$ Gummel iteration \\

 \noindent
 {\bf Mathematics Subject Classification (2010)} 65N15 $\cdot$ 65N30  \\

\section{Introduction}{\label{sec1}
 \noindent
 \par In this paper, we consider the following time-dependent Poisson-Nernst-Planck (PNP) equations
 \begin{align}  \label{pnpt1}
 \left\{ \begin{array} {rcl}
   {\partial _t}{p^i} - \nabla\cdot(\nabla {p^i} + {q^i}{p^i}\nabla \phi ) &=& F_i,~i = 1,2,     \\
   \\
  - \Delta \phi  - \sum\limits_{i = 1}^2 {{q^i}} {p^i} &=& F_3,   %\label{pnpt2}
  \end{array} \right.
 \end{align}
 for $ x\in \Omega$ and $t\in [0,T]$, where $\Omega$ is a bounded Lipschitz domain in $\mathbb{R}^d ~(d = 2, 3)$ and $\partial_t=\partial/\partial_t$. The index $i$ represents different ionic species, $p^i$ is the concentration of the $i$th ionic species with charge $q^i$, $\phi$ is the electrostatic potential and $F_i~(i=1,2,3)$ are the reaction terms. Denote the initial concentrations and potential by $(p^{i,0},\phi^0), \ i=1,2$. For simplicity, we employ the following homogeneous Dirichlet boundary conditions
 \begin{align}  \label{pnpt1-bd}
  p^1=p^2=\phi=0,~\mbox{on} ~\partial\Omega \times(0,T].
 \end{align}
 The classic PNP system was first proposed by W. Nernst \cite{W. Nernst 89} and M. Planck \cite{M. Planck 90}. It mainly describes the mass concentration of ions $p^i:\Omega\times (0,T]\rightarrow \mathbb{R}_0^+$ and the electrostatic potential $\phi:\Omega\times (0,T]\rightarrow \mathbb{R}$.
 As a continuum electrodiffusion model, PNP equations play an important role in the electrodiffusion reaction process. PNP equations couple the ion concentration distributions with the electrostatic potential which provide an ideal mean-field for describing this process \cite{R. Eisenberg 1993,B.Z.Lu-Holst 2010}. They have been widely used to study the ion channels and nanopores etc. \cite{R. D. Coalson 2005,U. Hollerbach 2000,A. Singer 2009,S. Xu 2014,U. Hollerbach 2001}.

 Since the strong nonlinearity and coupling of the PNP system, in general, it is difficult to find the analytic solution of PNP equations. Therefore, there appears many  numerical methods for solving PNP equations, including finite difference method, finite volume method and finite element method, etc. Finite difference method has been widely used to solve the PNP equations \cite{Q. Zheng 2011, A. Flavell 2013,D. He 2016, H. Liu 2014,M. Mirzadeh 2014},
   but the accuracy is not so good when it is applied to the biomolecular models with highly irregular surfaces. Finite volume method, which focuses on avoiding the disadvantage of finite difference method, was then applied to solve the PNP equations in irregular domains, but it is not easy to achieve the high accuracy owing to the difficulty of the design of high-order control volume \cite{S. R. Mathur 2009,J. Wu 2002}. Finite element method (FEM) has more flexibility and adaptability in irregular regions, which  has shown the efficiency and effectiveness of dealing with PNP equations \cite{Y. H. Song 2004,Y. C. Zhou 2008,B.Z.Lu-Holst 2010}.

 In contrast to amount of work on the numerical computations of PNP equations, the work of mathematical analysis of PNP equations seems limited, especially for finite element method. The existence and uniqueness of the finite element approximation for the time-dependent PNP equations are shown in \cite{A. Prohl 09}.  Recently, Yang and Lu \cite{Y. Yang 2013} presented an error analysis of the finite element method for a type of steady-state PNP equations modeling the electrodiffusion of ions in a solvated biomolecular system, in which the error estimates for the potential and concentration in $H^1$ norm depend on the $L^2$ error of the concentration. Sun et al. \cite{Y.Z. Sun 16} analyzed a fully implicit nonlinear Crank-Nicolson scheme of the finite element method for the PNP equations, where an optimal $H^1$ norm error estimate is obtained for both the ion concentration and electrostatic potential. They also presented a $L^2$ norm error estimate which is only sub-optimal for linear finite element approximations. Soon afterwards, Gao and He \cite{H. D. Gao 17} obtained an optimal $L^2$ error estimate with linear finite element approximations for a linearized backward Euler scheme. It is shown that this linearized scheme can preserve mass consevation and energy decay.
 %Compared with standard backward Euler scheme, the nonlinear term $p\nabla\phi$ is linearized  by $p^n\nabla\phi^{n+1}$.
 In this paper, we shall present an optimal $L^2$ error estimate for the classic backward Euler scheme. Compared with the scheme in \cite{H. D. Gao 17}, this one is fully implicit nonlinear. It is considered that this implicit nonlinear scheme could preserve most of the properties of the PNP equations and has been commonly used in the computation of the PNP system \cite{B.Z.Lu-Holst 2010,B.Z.Lu 2011,S. Xu 2014}.
 %The framework in the analysis are friendly generalized to the similar scheme such as the Crank-Nicolson scheme as in \cite{Y.Z. Sun 16}.
 The optimal error estimates in $L^2$ norm are obtained for both semi- and fully discrete finite element approximations.
  These results shall be used in the error analysis of the main algorithm of the paper.

 The PNP equations are a type of strong coupled system. Since the system consists of more than two partial differential equations, generally speaking, it is more convenient to solve it by using a decoupling method than solving it directly in application for large scale problems. Decoupling methods, by which the coupled problems can be separated into single subproblems, have some appealing features. For example, the existed computing resources are more flexibly applied to solving each subproblem separately, and the numerical implementation is more easy and efficient. The main decoupling methods used currently for solving PNP equations is the Gummel iteration \cite{M.Burger 2009,J.W.Jerome 1987,B.Z.Lu 2011}.
 For example, consider the following system coupled by two equations
 \begin{align}
  \left\{ \begin{array} {rcl}
    F(u_1, u_2) = 0,  \\
     \\
    G(u_1, u_2) = 0.
   \end{array}   \right.
 \end{align}
 The Gummel iteration for the above system could be: given $u_2^0$, for $k\ge0$, find $(u_1^{k+1},u_2^{k+1})$ such that
  \begin{align}\label{gummel}
  \left\{ \begin{array} {rcl}
    F(u_1^{k+1}, u_2^{k}) = 0,  \\
     \\
    G(u_1^{k+1}, u_2^{k+1}) = 0,
   \end{array}   \right.
 \end{align}
 until the error between the $(k+1)$th solution and $k$th solution is less than the tolerance. However, it converges slowly even diverges  if the discretized system of the PNP equations is a large scale problem.

 We note that two grid method is also one of decoupling methods which has been applied successfully to some coupled systems such as the Schr\"{o}dinger equation arising from quantum mechanics \cite{J. Jin 2006} and Stoke-Darcy model for coupling fluid flow with porous media flow \cite{M. Mu 2007,M. Cai 2009}. Two-grid method, proposed originally by Xu \cite{J. Xu 1992} in 1992, was designed for dealing with nonselfadjoint or indefinite problems and has a variety of application to solving many problems, such as the nonlinear reaction-diffusion equation \cite{Y. Liu 2015, L. Wu 2015} and the nonlinear parabolic equation \cite{Y. P. Chen 2013} etc. As a decoupling method for the coupled equations, the procedure of the two grid method may be different from that for a single partial different equation mentioned above, but has the similar idea that a coarse space solution is chosen as a reliable approximation to the fine space solution. In the two-grid algorithms designed in this paper for decoupling the time-dependent PNP equations, since we can use an appropriate coarse space solution as a reliable approximation to the fine space solution, the iteration between the equations solving individually can be avoided on the fine space, while it may requires lots of iterations for the Gummel method (\ref{gummel}) if an inappropriate initial value is used. Moreover, since the two-grid method is based on the finite element method, the numerical implement of the decoupling process is easy if the finite element method is used to solve PNP equations. These are the main reasons that we consider the two-grid method to deal with PNP equations among many decoupling methods.

 In this paper, we propose and analyze the two-grid algorithm for time-dependent PNP equations in a fully discrete scheme. Since PNP equations are different from the coupled models mentioned above, the design and analysis of the two-grid method can not directly follow the existed work. The error estimates in $H^1$ norm are obtained for both the concentration and potential. The theoretical results show that if the mesh size $H$ and $h$ satisfy some requirement (for example $H=\mathcal{O}(h^{\frac{1}{2}})$ with linear finite elements), then the two-grid method can retain the same accuracy as the conventional finite element method. In addition, some numerical examples including an ion channel problem are shown to verify the theoretic results. The CPU time cost shows the validity and efficiency of the two-grid method for PNP equations.

 The rest of this paper is organized as follows. In Section \ref{sec2}, we introduce some notations and the weak formulations of the PNP system. The projection operators and some useful estimates are also given in this section. In Section \ref{sec3}, we show the optimal $L^2$ error estimates of the standard finite element method for both semi- and fully discrete schemes. The two-grid method and some error analysis are presented in section \ref{sec4}. Numerical experiments are reported in Section \ref{sec-num-res} to show the effectiveness of the proposed method. The conclusion is presented in section \ref{sec-conclusion}.

\setcounter{equation}{0}
\section{Weak Formulation and Projection Operators} \label{sec2}

\noindent
 \par In this section, we shall present the variational forms of PNP system \eqref{pnpt1}-\eqref{pnpt1-bd} and some projection estimates which shall be used in our analysis.

 First, we clarify the standard notations for Sobolev spaces $W^{s,p}(\Omega)$ and their associated norms and seminorms, see, e.g., \cite{R. A. Adams 75, S.C.Brenner 2002}. For $p=2$, we denote $W^{s,2}(\Omega )=H^s(\Omega)$, $ H_0^1(\Omega)=\{v|v\in H^1(\Omega):v|_{\partial \Omega}=0\} $, $\|\cdot\|_{s,p,\Omega}=\|\cdot\|_{W^{s,p}(\Omega)}$ with the expression that $\|\cdot\|$ and $(\cdot,\cdot)$ denote the norm and inner product in $L^2$, and $\|\cdot\|_{0,\infty}=\|\cdot\|_{L^\infty}$.

  Let $\mathcal{T}_h$ be a quasi-uniform partition of $\Omega = \cup_e \Omega_e$, and the mesh size $h = \mbox{max}_{{\Omega_e}\in\mathcal{T}_h} \{\mbox{diam}~\Omega_e\}$. Then for a given partition $\mathcal{T}_h$, we define $V_h^r$ as the $r$-th order finite element subspace of $H_0^1(\Omega)$ as follows
 \begin{eqnarray}
  V_h^r=\{v \in H^1(\Omega): v|_{\partial\Omega}=0~\mbox{and}~v|_e \in P_r(e), \forall e\in \mathcal{T}_h\},
 \end{eqnarray}
 where $P_r(e)$ is the space of polynomial with degree $r$.

 The weak formulation of \eqref{pnpt1}-\eqref{pnpt1-bd} is reads: find $p^i\in L^2\left(0,T;H_0^1(\Omega)\right) \ \cap  \ L^\infty\left(0,T;L^\infty(\Omega)\right),i=1,2$, and $\phi(t)\in H_0^1(\Omega)$ such that
 \begin{align} \label{pnpw1}
 (\partial_t p^i,v)+(\nabla p^i,\nabla v)+(q^ip^i\nabla\phi,\nabla v)
 &=(F_i,v), \quad~\forall v\in H_0^1(\Omega), \\  \label{pnpw2}
 (\nabla \phi,\nabla w)-\sum\limits_{i = 1}^2 q^i(p^i,w)
 &=(F_3,w), \quad \forall w\in H_0^1(\Omega).
 \end{align}

 The corresponding semi-discretization to \eqref{pnpw1}-\eqref{pnpw2} is defined as follows: find $(p_h^i,\phi_h)\in [V_h^r]^3,i=1,2$, such that
 \begin{align} \label{pwh1}
  (\partial_t p_h^i,v_h)+(\nabla p_h^i,\nabla v_h)
  +(q^ip_h^i\nabla\phi_h,\nabla v_h)
 &=(F_i,v_h), \quad~\forall v_h\in V_h^r, \\ \label{pwh2}
 (\nabla \phi_h,\nabla w_h)-\sum\limits_{i = 1}^2 q^i(p_h^i,w_h)
 &=(F_3,w_h),\quad \forall w_h\in V_h^r,
 \end{align}
 with the initial condition $(p_h^{i,0},\phi_h^0)$ is an approximation of $(p^{i,0},\phi^0)$ and the Dirichlet boundary condition $p_h^i=\phi_h=0$ on $\partial \Omega$.

 In order to get the full discretization  of the system \eqref{pnpw1}-\eqref{pnpw2}, we first define a uniform partition
 $0=t^0<t^1<\cdots<t^N=T$ with time step size $\tau=\frac{T}{N}$ and $t^n=n\tau, n\in\mathbb{Z}$. For any function $u$, denote by
 $$u^n = u(x,t^n),$$
 and
 $$D_\tau u^{n+1} =\frac{u^{n+1}-u^{n}} { \tau} , ~\mbox{for}~ n=0,1,2,\cdots,N-1.$$

 Then the backward Euler full discretization scheme of the system \eqref{pnpw1}-\eqref{pnpw2} is : given $(P_h^{i,n},\Phi_h^n)\in [V_h^r]^3,\ i=1,2$, find $(P_h^{i,n+1}, \Phi_h^{n+1})\in [V_h^r]^3$, such that
 \begin{align} \label{fph1}
 (D_\tau P_h^{i,n+1},v_h)+(\nabla P_h^{i,n+1},\nabla v_h)+(q^iP_h^{i,n+1}\nabla\Phi_h^{n+1},\nabla v_h)
 &= (F_i^{n+1},v_h), \quad~\forall v_h\in V_h^r, \\ \label{fph2}
 (\nabla\Phi_h^{n+1},\nabla w_h)-\sum\limits_{i = 1}^2 q^i(P_h^{i,n+1},w_h)
 &= (F_3^{n+1},w_h),\quad\forall w_h\in V_h^r.
 \end{align}

 The well-posedness and stability of the solutions  to the the schemes \eqref{fph1}-\eqref{fph2} have been presented  in \cite{A. Prohl 09}.
 In the rest part of this paper, we assume that the exact solution of the
 PNP equations \eqref{pnpt1} exists and satisfies the following regularity assumptions
 \begin{eqnarray} \label{Reg_assumption}
 \left\{ \begin{array} {l}
  \|p^i\|_{L^{\infty}(0,T; H^{r+1}\cap W^{1,\infty}(\Omega))} +\|p^i_t\|_{L^{\infty}(0,T; H^{r+1}\cap W^{1,\infty}(\Omega))}+\|p^i_{tt}\|_{L^{\infty}(0,T; H^{r+1}\cap W^{1,\infty}(\Omega))}\le C,  \\
  \\
  \|\phi\|_{L^{\infty}(0,T;W^{r+1,\infty}(\Omega))} +\|\phi_t\|_{L^{\infty}(0,T;W^{r+1,\infty}(\Omega))}
  +\|\phi_{tt}\|_{L^{\infty}(0,T; W^{r+1,\infty}(\Omega))} \le C.
  \end{array} \right.
 \end{eqnarray}

 To present the error estimates in this paper, for given $t\in[0,T]$, we define $R_h: H_0^1(\Omega)\rightarrow V_h^r$ to be a Ritz projection operator by
 \begin{eqnarray} \label{Rhprojetion-1}
  \big(\nabla(R_hp^i-p^i),\nabla v_{h}\big)
      +\big(q^ip^i\nabla(R_h\phi-\phi),\nabla v_{h}\big)=0, ~~\forall ~v_{h}\in V_h^r, \\ \label{Rhprojetion-2}
  \big(\nabla(R_h\phi-\phi),\nabla w_h\big)
      -\big(\sum\limits_{i=1}^2q^i(R_hp^i-p^i),w_h\big)=0,~~\forall ~w_{h}\in V_h^r.
 \end{eqnarray}

 Particularly, the similar definition of the projection operator $R_h$ can be found in \cite{H. D. Gao 17}. At the initial step in \eqref{fph1}-\eqref{fph2}, we take the initial value $p_h^{i,0} = R_hp^{i,0}$.

 We define the projection error by
 $$ \theta_{p^i}=R_hp^i-p^i, ~ \theta_\phi=R_h\phi-\phi.$$
 Then, by standard finite element theory and the regularity assumption \eqref{Reg_assumption}, we have
 \begin{eqnarray} \label{projec-estimate1}
  \|\theta_{p^i}\|+h\|\theta_{p^i}\|_1 \le C h^{r+1}, \\ \label{projec-estimate2}
  \|\theta_\phi\|+h\|\theta_\phi\|_1\le C h^{r+1}, \\  \label{projec-estimate3}
  \|\partial_t\theta_{p^i}\|+h\|\partial_t\theta_{p^i}\|_1 \le C h^{r+1}.
 \end{eqnarray}

 Finally, we introduce two lemmas which will be used in the error analysis.

\begin{lemma}\label{G-N-inequality}
 (\textbf{Gagliardo--Nirenberg inequality} \cite{L. Nirenberg 1966}) Let $u$ be a function defined on a bounded domain $\Omega \in \mathbb{R}^d$ and its derivatives of order $m$ belongs to $L^r$ in $\Omega$. Then for the derivatives $\partial^j u$, $0\le j < m$, the following inequalities hold (where constant $C$ depends only on $\Omega, m, j, q, r$)
 $$ \|\partial^j u\|_{L^p}\le C\big(\|\partial^m u\|_{L^r}^a
    \|u\|_{L^q}^{1-a}+\|u\|_{L^q} \big),$$
 for $\frac{j}{m}\le a \le 1$ with
 $$ \frac{1}{p}=\frac{j}{d}+a\Big(\frac{1}{r}-\frac{m}{d} \Big)+(1-a)\frac{1}{q}, $$
 except $1< r < \infty$ and $ m-j-\frac{d}{r}$ is a non-negative integer, in which case the above estimate holds only for $\frac{j}{m}\le a < 1$.
\end{lemma}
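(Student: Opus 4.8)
The inequality is classical, and the plan is to prove it in three stages: (i) reduce from the bounded Lipschitz domain $\Omega$ to the whole space $\mathbb{R}^d$ via a bounded linear extension operator; (ii) establish the base case $j=0$, $m=1$ on $\mathbb{R}^d$; and (iii) bootstrap to arbitrary $0\le j<m$ by iterating the base case together with an interpolation (convexity in the order of differentiation) estimate. First I would record that, by density, it suffices to prove everything for $u\in C_c^\infty$ and then pass to the limit in the relevant Sobolev class.

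For stage (ii), the starting point is the elementary Gagliardo–Loomis–Whitney estimate: writing $u(x)=\int_{-\infty}^{x_k}\partial_k u\,ds_k$ for each coordinate $k$, bounding $|u(x)|\le\int_{\mathbb{R}}|\partial_k u|\,ds_k$, taking the product over $k=1,\dots,d$, and integrating successively in $x_1,\dots,x_d$ with the iterated H\"older inequality yields $\|u\|_{L^{d/(d-1)}(\mathbb{R}^d)}\le\prod_{k=1}^d\|\partial_k u\|_{L^1(\mathbb{R}^d)}^{1/d}\le C\|\nabla u\|_{L^1(\mathbb{R}^d)}$. Applying this to $v=|u|^{\gamma-1}u$ for a suitable $\gamma>1$ and then using H\"older to split $\int|u|^{\gamma-1}|\nabla u|$ into $\|\nabla u\|_{L^r}$ times a power of $\|u\|_{L^q}$ produces $\|u\|_{L^p}\le C\|\nabla u\|_{L^r}^{a}\|u\|_{L^q}^{1-a}$ for all exponents linked by the stated dimensional identity with $j=0$, $m=1$; that identity is exactly the requirement that both sides transform identically under the dilation $u(\cdot)\mapsto u(\lambda\,\cdot)$, so it is forced, and one checks that the choice of $\gamma$ realizes every admissible $(p,q,r)$ with $0\le a\le1$.

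For stage (iii), I would first handle $a=j/m$: estimating $\|\partial^{j}u\|_{L^p}$ in terms of $\|\nabla\partial^{j}u\|$ and $\|\partial^{j}u\|$ and iterating gives, by an induction on $m-j$, the interpolation-of-derivatives bound $\|\partial^j u\|_{L^p}\le C\|\partial^m u\|_{L^r}^{j/m}\|u\|_{L^q}^{1-j/m}+C\|u\|_{L^q}$; the pure Sobolev endpoint $a=1$ (with $j=0$) follows by applying the first-order estimate $m$ times. A multiplicative H\"older interpolation between the endpoints $a=j/m$ and $a=1$ then covers the whole range $j/m\le a\le1$. The excluded borderline $a=1$ when $1<r<\infty$ and $m-j-d/r\in\mathbb{Z}_{\ge0}$ is precisely the failure of the critical Sobolev embedding $W^{k,d/k}\not\hookrightarrow L^\infty$, so the argument must stop just short of that value. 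Finally, for stage (i), since $\Omega$ is a bounded Lipschitz domain there is a bounded linear extension $E:W^{m,r}(\Omega)\cap L^q(\Omega)\to W^{m,r}(\mathbb{R}^d)\cap L^q(\mathbb{R}^d)$ (Stein/Calder\'on extension) with $Eu=u$ on $\Omega$ and $\|\partial^m(Eu)\|_{L^r(\mathbb{R}^d)}+\|Eu\|_{L^q(\mathbb{R}^d)}\le C(\|\partial^m u\|_{L^r(\Omega)}+\|u\|_{L^q(\Omega)})$ — but not $\|\partial^m(Eu)\|_{L^r}\le C\|\partial^m u\|_{L^r}$ alone, which is why the additive $\|u\|_{L^q}$ term is unavoidable on a bounded domain (the purely homogeneous inequality fails there, as constants show). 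Plugging $Eu$ into the whole-space inequality, restricting back to $\Omega$, and absorbing cross terms by Young's inequality yields the stated form. The main obstacle is stage (iii): obtaining the intermediate-derivative interpolation with the correct range of $a$ and correctly isolating the exceptional exponents requires the delicate induction and careful bookkeeping of which endpoint estimates remain valid; stage (i) is standard but technical for merely Lipschitz boundaries.
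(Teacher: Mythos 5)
The paper offers no proof of this lemma at all: it is quoted verbatim as a classical result and attributed to Nirenberg's 1966 paper, so there is no in-paper argument to compare yours against. Your three-stage outline (extension from the Lipschitz domain to $\mathbb{R}^d$, the first-order Gagliardo--Nirenberg--Sobolev base case via the Loomis--Whitney/iterated-H\"older argument applied to $|u|^{\gamma-1}u$, then bootstrapping to intermediate derivatives and interpolating in $a$) is exactly the standard textbook route, and your remarks about why the additive $\|u\|_{L^q}$ term is forced on a bounded domain and why the dimensional identity is forced by scaling on $\mathbb{R}^d$ are both correct and worth saying.

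That said, as a proof the proposal is still a sketch in precisely the place where the theorem is hard. The endpoint $a=j/m$, i.e.\ the interpolation-of-derivatives inequality $\|\partial^j u\|\le C\|\partial^m u\|^{j/m}\|u\|^{1-j/m}+C\|u\|$ with the mixed exponents dictated by the dimensional identity, is the actual content of Nirenberg's theorem; saying it follows ``by an induction on $m-j$'' from the first-order case defers the step where the exponent bookkeeping and the exceptional set of parameters actually arise. Two smaller points: the $a=1$ endpoint is needed for all $0\le j<m$, not only $j=0$ (for $j>0$ it is the Sobolev embedding applied to $\partial^j u$, which your iteration does give, but you should state it); and the Lyapunov/H\"older interpolation of $L^p$ norms between the two endpoint values of $p$ requires checking that the intermediate $p$ determined by the stated identity really is the correct convex combination, which is a computation you have not recorded. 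None of these is a wrong turn --- the architecture is sound and matches the classical proof --- but as written the proposal would not substitute for the citation the paper actually relies on.
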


\begin{lemma} \cite{Y. Chen 1998} \label{poisson-regEstimate}
 Suppose that $\Omega$ is a smooth bounded domain and $u \in H^k(\Omega)$ is a solution of
 \begin{eqnarray*}
  \left\{ \begin{array} {rcl}
  -\Delta u = f, ~ x \in \Omega, \\
  \\
  u = 0, ~ x \in \partial \Omega.
  \end{array} \right.
 \end{eqnarray*}
 Then the following estimate holds for $1 < p < \infty$
 $$ \|u\|_{W^{2,p}} \le C \|f\|_{L^p}. $$
\end{lemma}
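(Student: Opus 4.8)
This is the classical $L^p$ (Calderón--Zygmund) regularity estimate for the Dirichlet problem of the Laplacian on a smooth domain, and since it is quoted here rather than reproved, the natural plan is to reconstruct it in three stages: an interior estimate, a boundary estimate, and absorption of the lower-order term. Throughout, $f\in L^p(\Omega)$ is assumed (otherwise the right-hand side is infinite and there is nothing to prove).

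First, the plan for the \emph{interior estimate} is to localize: multiply $u$ by a smooth cut-off $\eta$ supported in a ball $B\Subset\Omega$, so that $-\Delta(\eta u)=\eta f-2\nabla\eta\cdot\nabla u-u\Delta\eta=:g$ in $\mathbb{R}^d$. Representing $\eta u$ by the Newtonian potential of $g$, the second derivatives $\partial_{ij}(\eta u)$ are, up to a multiple of $g$ itself, given by the Calderón--Zygmund singular integral operators with kernels $\partial_{ij}\Gamma$ applied to $g$. These operators are bounded on $L^p(\mathbb{R}^d)$ for every $1<p<\infty$, which yields $\|u\|_{W^{2,p}(\Omega')}\le C\big(\|f\|_{L^p(\Omega)}+\|u\|_{W^{1,p}(\Omega)}\big)$ for any subdomain $\Omega'$ compactly contained in $\Omega$; an interpolation inequality then lets one replace $\|u\|_{W^{1,p}}$ by $\|u\|_{L^p}$ at the cost of enlarging $C$.

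Second, for the \emph{boundary estimate} I would cover a neighbourhood of $\partial\Omega$ by finitely many coordinate charts in which, using the smoothness of $\Omega$, the boundary is flattened by a $C^\infty$ diffeomorphism. In each such chart the transformed equation is a second-order elliptic equation on a half-ball with coefficients that are smooth and, on sufficiently small balls, uniformly close to constants, together with homogeneous Dirichlet data on the flat part; an odd reflection across the flat boundary then reduces this to the interior situation treated above (with a perturbation/freezing-of-coefficients argument to handle the variable coefficients). Patching the interior estimate and the finitely many boundary estimates together with a partition of unity subordinate to the covering gives the global a priori bound $\|u\|_{W^{2,p}(\Omega)}\le C\big(\|f\|_{L^p(\Omega)}+\|u\|_{L^p(\Omega)}\big)$.

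Finally, to \emph{remove the term} $\|u\|_{L^p(\Omega)}$ I would invoke uniqueness of the solution: if no such absorption were possible, a standard compactness argument produces a sequence $u_k$ with $\|u_k\|_{L^p}=1$, $\|f_k\|_{L^p}\to0$, and $u_k$ bounded in $W^{2,p}$; passing to a limit (using compact embedding $W^{2,p}\hookrightarrow L^p$) yields a nonzero solution of the homogeneous problem, contradicting uniqueness. Hence $\|u\|_{L^p(\Omega)}\le C\|f\|_{L^p(\Omega)}$, and substituting this into the previous inequality gives the stated estimate. I expect the boundary estimate --- the localization, flattening, and reflection near $\partial\Omega$, and in particular the control of the variable coefficients after the change of variables --- to be the main technical obstacle; the interior part is essentially immediate from the $L^p$ boundedness of singular integrals, and the smoothness hypothesis on $\Omega$ is precisely what makes the flattening step legitimate.
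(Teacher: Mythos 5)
The paper does not prove this lemma at all --- it is quoted directly from the cited reference (Chen--Wu, \emph{Second-order Elliptic Equations and Elliptic Systems}), where it is established by exactly the argument you outline: interior Calder\'on--Zygmund estimates via the Newtonian potential, boundary estimates by flattening and reflection with a freezing-of-coefficients perturbation, patching by a partition of unity, and removal of the $\|u\|_{L^p}$ term by a compactness--uniqueness argument. Your reconstruction is the standard and correct proof of this classical result, so there is nothing to compare beyond noting that the paper simply cites it.
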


\setcounter{equation}{0}
\section{$L^2$ Norm Error Analysis for Finite Element Approximation} \label{sec3}

 \noindent
 \par In the section, we give the a priori error estimates for both the semi-discretization  finite element solution $(p_h^i,\phi_h)$ of \eqref{pwh1}-\eqref{pwh2} and the fully discrete finite element solution $(P_h^{i,n},\Phi_h^n)$ of \eqref{fph1}-\eqref{fph2}. For the sake of analysis, we assume the source term $F_3\in L^4(\ome)$ and the size of the grid $h<<1$.

\subsection{Error Analysis for the Semi-discretization} \label{subsec-semi}

\noindent
 \par We give the a priori error estimate for the semi-discretization  finite element approximation $(p_h^i,\phi_h)$ as follows.
\begin{theorem} \label{Theore-semi}
 Let $(p^i,\phi)$ and $(p_h^i,\phi_h)$  be the solutions  of \eqref{pnpw1}-\eqref{pnpw2} and \eqref{pwh1}-\eqref{pwh2}, respectively.
 Then for $t\in[0,T]$, we have the following estimate
 \begin{eqnarray} \label{semi-estimat}
  \|p^i-p_h^i\|+\|\phi-\phi_h\| \le C h^{r+1}.
 \end{eqnarray}
\end{theorem}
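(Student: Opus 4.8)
The plan is to follow the classical route of splitting the error with the Ritz projection $R_h$ of \eqref{Rhprojetion-1}--\eqref{Rhprojetion-2}, which is constructed precisely so that the linear cross-couplings of the PNP spatial operator are reproduced exactly, and then to close a Gronwall estimate on the remaining finite-element part of the error.

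First I would write
$$ p^i-p_h^i=-\theta_{p^i}+\xi_{p^i},\qquad \phi-\phi_h=-\theta_\phi+\xi_\phi,\qquad \xi_{p^i}:=R_hp^i-p_h^i\in V_h^r,\quad \xi_\phi:=R_h\phi-\phi_h\in V_h^r, $$
so that by \eqref{projec-estimate1}--\eqref{projec-estimate3} it suffices to bound $\xi_{p^i}$ and $\xi_\phi$. Subtracting the semi-discrete system \eqref{pwh1}--\eqref{pwh2} from the weak form \eqref{pnpw1}--\eqref{pnpw2} tested on $V_h^r$, writing the nonlinear term as $q^ip^i\nabla\phi-q^ip_h^i\nabla\phi_h=q^ip^i\nabla(\phi-\phi_h)+q^i(p^i-p_h^i)\nabla\phi_h$, and using \eqref{Rhprojetion-1}--\eqref{Rhprojetion-2} to eliminate every projection-error contribution except $(\partial_t\theta_{p^i},v_h)$, I expect the error equations
\begin{align*}
 (\partial_t\xi_{p^i},v_h)+(\nabla\xi_{p^i},\nabla v_h)
 &=(\partial_t\theta_{p^i},v_h)-(q^ip^i\nabla\xi_\phi,\nabla v_h)-\big(q^i(\xi_{p^i}-\theta_{p^i})\nabla\phi_h,\nabla v_h\big),\\
 (\nabla\xi_\phi,\nabla w_h)&=\sum_{i=1}^{2} q^i(\xi_{p^i},w_h),
\end{align*}
for all $v_h,w_h\in V_h^r$. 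Taking $w_h=\xi_\phi$ in the second identity and using Poincar\'e's inequality gives at once $\|\xi_\phi\|_1\le C\sum_{i=1}^{2}\|\xi_{p^i}\|$.

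Next I would set $v_h=\xi_{p^i}$ in the first error equation and sum over $i=1,2$, so that its left side is $\tfrac12\tfrac{d}{dt}\sum_i\|\xi_{p^i}\|^2+\sum_i\|\nabla\xi_{p^i}\|^2$. On the right, $(\partial_t\theta_{p^i},\xi_{p^i})\le Ch^{r+1}\|\xi_{p^i}\|$ by \eqref{projec-estimate3}; the term $(q^ip^i\nabla\xi_\phi,\nabla\xi_{p^i})$ is bounded, using $\|p^i\|_{0,\infty}\le C$ from \eqref{Reg_assumption} and the estimate on $\|\xi_\phi\|_1$, by $\tfrac18\|\nabla\xi_{p^i}\|^2+C\sum_j\|\xi_{p^j}\|^2$; and $\big(q^i(\xi_{p^i}-\theta_{p^i})\nabla\phi_h,\nabla\xi_{p^i}\big)$ is bounded, using $\|\theta_{p^i}\|\le Ch^{r+1}$ and an $h$-uniform bound on $\nabla\phi_h$ (discussed below), by $\tfrac18\|\nabla\xi_{p^i}\|^2+C\|\xi_{p^i}\|^2+Ch^{2(r+1)}$. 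Absorbing the $\|\nabla\xi_{p^i}\|^2$ terms into the left side leaves $\tfrac{d}{dt}\sum_i\|\xi_{p^i}\|^2+c\sum_i\|\nabla\xi_{p^i}\|^2\le C\sum_i\|\xi_{p^i}\|^2+Ch^{2(r+1)}$; Gronwall's lemma, with $\xi_{p^i}(0)=0$ since $p_h^{i,0}=R_hp^{i,0}$, then yields $\sum_i\|\xi_{p^i}(t)\|^2\le Ch^{2(r+1)}$, hence $\|\xi_\phi\|\le Ch^{r+1}$, and the triangle inequality with \eqref{projec-estimate1}--\eqref{projec-estimate2} gives \eqref{semi-estimat}.

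The main obstacle is that the genuinely nonlinear term $\big(q^i\xi_{p^i}\nabla\phi_h,\nabla\xi_{p^i}\big)$ — which no linear projection can absorb — requires an $h$-uniform a priori bound of the type $\|\nabla\phi_h\|_{0,\infty}\le C$ (or an $L^4$-in-space analogue, to be combined with the Gagliardo--Nirenberg inequality of Lemma \ref{G-N-inequality}) in order to be controlled by $\tfrac18\|\nabla\xi_{p^i}\|^2+C\|\xi_{p^i}\|^2$. This is where the hypotheses $F_3\in L^4(\Omega)$ and $h\ll1$, together with Lemma \ref{poisson-regEstimate}, enter: since $\phi_h$ is the finite-element solution of a Poisson problem with data $\sum_i q^ip_h^i+F_3$, the stability of $p_h^i$ from \cite{A. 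Prohl 09} (with Sobolev embedding) bounds this data in $L^4(\Omega)$, and a discrete $W^{2,4}$-regularity estimate on the quasi-uniform mesh $\mathcal{T}_h$ then gives $\|\nabla\phi_h\|_{0,\infty}\le C$ via $W^{2,4}\hookrightarrow W^{1,\infty}$; alternatively one first proves a possibly suboptimal $H^1$ error estimate and runs a continuity/bootstrap argument on a maximal subinterval of $[0,T]$, which is also the natural tool for the fully discrete case. Once such control of $\nabla\phi_h$ is secured, the energy estimate closes as above and the theorem follows.
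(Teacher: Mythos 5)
Your proposal reproduces the paper's argument almost exactly: the same Ritz-projection splitting via \eqref{Rhprojetion-1}--\eqref{Rhprojetion-2}, the same testing with $\xi_\phi$ and $\xi_{p^i}$, the same three right-hand-side terms, and Gronwall with $\xi_{p^i}(0)=0$. The one place where you diverge is the control of $\|\nabla\phi_h\|_{0,\infty}$, and there your primary route is the shakier of your two suggestions. The paper does not establish an $h$-uniform bound $\|\nabla\phi_h\|_{0,\infty}\le C$ from a priori stability of $p_h^i$; the pointwise-in-time $L^4(\Omega)$ control of $\sum_i q^ip_h^i$ that this would require is not among the stability estimates it invokes, and is not obviously available before the error estimate is proved. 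Instead it proves the weaker, solution-dependent bound \eqref{phi-infty}, namely $\|\nabla\phi_h\|_{0,\infty}\le C\big(\sum_i\|\nabla e_{p^i}\|+C_1\big)$, by combining the $W^{1,\infty}$ stability of the elliptic finite element approximation with Lemma \ref{poisson-regEstimate} and the embedding $\|e_{p^i}\|_{0,4}\le C\|e_{p^i}\|_{1,2}$. This produces the cubic term $\sum_i\|e_{p^i}\|\,\|\nabla e_{p^i}\|^2$ in \eqref{pw-pro4p}, which is then absorbed by a continuation/induction argument on the hypothesis $\|e_{p^i}\|\le Ch^{r+1}$, so that $Ch^{r+1}\|\nabla e_{p^i}\|^2\le\epsilon\|\nabla e_{p^i}\|^2$ for $h\ll1$. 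In other words, the ``alternative'' you mention only in passing --- a bootstrap/continuity argument on a maximal subinterval of $[0,T]$ --- is in fact the paper's main mechanism, and you should promote it to the primary route; with that substitution your proof closes and coincides with the paper's.
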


\begin{proof}
 From the projection error estimates \eqref{projec-estimate1}-\eqref{projec-estimate3}, we only need to estimate the following error functions
 \begin{eqnarray*}
  e_{p^i} = p_h^i-R_hp^i, ~~ e_\phi=\phi_h-R_h\phi.
 \end{eqnarray*}

 It follows from \eqref{pnpw1}-\eqref{pnpw2} and \eqref{pwh1}-\eqref{pwh2} that, $\forall v_{h}, w_h\in V_h^r$
 \begin{eqnarray} \label{semi-pr1}
  \big(\partial_t(p_h^i-p^i),v_{h}\big)
  +\big(\nabla(p_h^i-p^i),\nabla v_{h}\big)
   +q^i\big(p_h^i\nabla\phi_h-p^i\nabla\phi,\nabla v_{h}\big)=0, \\
  \big(\nabla(\phi_h-\phi),\nabla w_h\big)
       -\big(\sum\limits_{i=1}^2 q^i(p_h^i-p^i),w_h\big)=0. \label{semi-pr2}
 \end{eqnarray}
 Taking $w_h = e_\phi$ in \eqref{semi-pr2} and using \eqref{Rhprojetion-2}, we have
 \begin{align} \label{ephi-ep-en}
  \big(\nabla e_\phi,\nabla e_\phi\big)
   =\big(\sum\limits_{i=1}^2 q^ie_{p^i} ,e_\phi\big),
 \end{align}
 which easily yields
 \begin{eqnarray} \label{nablaphi-ep-en}
  \|\nabla e_\phi\| \le C \sum\limits_{i=1}^2\|e_{p^i}\|.
 \end{eqnarray}
 Taking $v_h =e_{p^i}$ in \eqref{semi-pr1} and using \eqref{Rhprojetion-1}, we have
 \begin{align} \label{semi-ep}
  (\partial_t e_{p^i}, e_{p^i})+(\nabla e_{p^i},\nabla e_{p^i})
   = \sum\limits_{j=1}^3 I_j,
 \end{align}
 where $I_j,~ j=1,2,3$, are defined as
 \begin{align*}
 I_1&:= -(\partial_t\theta_{p^i},e_{p^i}), \\
 I_2&:= -q^i\big(p^i\nabla e_\phi, \nabla e_{p^i}\big), \\
 I_3&:= -q^i\big((e_{p^i}+\theta_{p^i})\nabla\phi_h,\nabla e_{p^i}\big).
 \end{align*}

 In the following, we shall estimate $I_1,~I_2$ and $I_3$, respectively.
 By the projection estimate \eqref{projec-estimate3}, there holds
 \begin{align}  \label{semi-I1}
 I_1 \le \|\partial_t\theta_{p^i}\|\|e_{p^i}\|
 \le Ch^{r+1}\|e_{p^i}\| \le C(h^{2r+2}+\|e_{p^i}\|^2).
 \end{align}
 Using \eqref{nablaphi-ep-en} and the regularity assumption \eqref{Reg_assumption}, we have
 \begin{align}  \label{semi-I2}
  I_2 \le C\|p^i\|_{0,\infty}\|\nabla e_\phi\|\|\nabla e_{p^i}\|
      \le C \sum\limits_{i=1}^2\|e_{p^i}\|\|\nabla e_{p^i}\|
      \le C \sum\limits_{i=1}^2 \Big(\|e_{p^i}\|^2+\epsilon\|\nabla e_{p^i}\|^2\Big),
 \end{align}
 where $0<\epsilon<1$ is a constant. To estimate $I_3$, we shall prove the following result
 \begin{eqnarray} \label{phi-infty}
  \|\nabla\phi_h\|_{0,\infty}
  \le C \Big(\sum\limits_{i=1}^2\|\nabla e_{p^i}\|+ C_1\Big),
 \end{eqnarray}
 where $C_1$ is a positive constant satisfying $\sum\limits_{i=1}^2\|p^i\|_{1,2}+\|F_3\|_{0,4}\le C_1$.

 It is easy to see that $\phi_h$ can be viewed as the finite element approximation to the solution of the Poisson equation
 \begin{eqnarray}\label{phi-poisson}
  -\Delta \phi = \sum\limits_{i=1}^2 q^ip_h^i+F_3,
 \end{eqnarray}
 with homogeneous Dirichlet boundary condition.
 Hence, by $W^{1,p}$-estimate of the finite element methods \cite{S.C.Brenner 2002,R.Rannacher 1982}, Lemma \ref{G-N-inequality} and Lemma \ref{poisson-regEstimate}, we have
 \begin{align}  \nonumber
  \|\nabla\phi_h\|_{0,\infty}
  &\le C\|\phi\|_{1,\infty}\le C\|\phi\|_{2,4} \le C
       \|\sum\limits_{i=1}^2 q^ip_h^i+F_3\|_{0,4} \\  \nonumber
  &\le C \Big(\sum\limits_{i=1}^2\big(\|p_h^i-R_hp^i\|_{0,4}
        +\sum\limits_{i=1}^2\|R_hp^i\|_{0,4}+\|F_3\|_{0,4}\Big) \\
  &\le C\Big(\sum\limits_{i=1}^2\|e_{p^i}\|_{1,2}
        + \sum\limits_{i=1}^2\|p^i\|_{1,2}+\|F_3\|_{0,4}\Big)
  \le C \Big(\sum\limits_{i=1}^2\|\nabla e_{p^i}\|+ C_1\Big), \label{phi-infty-pro}
 \end{align}
 which yields estimate \eqref{phi-infty}.

 Then by \eqref{phi-infty} and the projection error estimate \eqref{projec-estimate1}, it yields
 \begin{align}  \nonumber
  I_3 &\le C\|e_{p^i}+\theta_{p^i}\|\|\nabla\phi_h\|_{0,\infty}\|
          \|\nabla e_{p^i}\| \\  \nonumber
  &\le C (\|e_{p^i}\|+ h^{r+1})
        (\sum\limits_{i=1}^2\|\nabla e_{p^i}\|
        +C_1)\|\nabla e_{p^i}\| \\   \nonumber
  &\le C\Big(\sum\limits_{i=1}^2\|e_{p^i}\|\|\nabla e_{p^i}\|^2
        +\|e_{p^i}\|\|\nabla e_{p^i}\|
        +h^{r+1}\sum\limits_{i=1}^2\|\nabla e_{p^i}\|^2
        +h^{r+1}\|\nabla e_{p^i}\| \Big) \\
 &\le C\Big(\sum\limits_{i=1}^2\|e_{p^i}\|
       \|\nabla e_{p^i}\|^2+\|e_{p^i}\|^2
       +h^{2r+2}\Big) +\epsilon\|\nabla e_{p^i}\|^2,
     \label{semi-I3}
 \end{align}
 where we have used $Ch^{r+1} \leq \epsilon$ when $h<<1$. Substituting estimates \eqref{semi-I1}-\eqref{semi-I2} and \eqref{semi-I3} into \eqref{semi-ep}, we get
 \begin{align}
  \frac{1}{2}\partial_t\|e_{p^i}\|^2+\|\nabla e_{p^i}\|^2
  &\le C\Big(\sum\limits_{i=1}^2\|e_{p^i}\|\|\nabla e_{p^i}\|^2+\|e_{p^i}\|^2
       +h^{2r+2}\Big)+\epsilon\|\nabla e_{p^i}\|^2.
      \label{pw-pro4p}
 \end{align}

 Now we conduct a mathematic induction process to prove the following inequality
 \begin{eqnarray}\label{ep-hypothesis}
  \|e_{p^i}\|\le C h^{r+1}, ~ \forall ~t\in [0,T].
 \end{eqnarray}
 Assume \eqref{ep-hypothesis} holds for any $t\in [0,T^*],~ T^*<T$. Then by \eqref{pw-pro4p},  we get
 \begin{align}
  \frac{1}{2}\partial_t\|e_{p^i}\|^2+\|\nabla e_{p^i}\|^2
  \le  C (h^{2r+2} + \|e_{p^i}\|^2)+\epsilon\|\nabla e_{p^i}\|^2.
   \label{pw-pro6}
 \end{align}
 Take integral with respect to $t$,
 \begin{eqnarray*}
  \|e_{p^i}\|^2 + \int_0^t\|\nabla e_{p^i}(s)\|^2 ds
  \le C \Big(h^{2r+2}+\int_0^t\|e_{p^i}(s)\|^2 ds \Big),
 \end{eqnarray*}
 where we have used the fact $\|e_{p^i}(0)\|=0$ by the initial condition $R_hp^i(0)=p_h^i(0)$. By using Gronwall's inequality, we have for $0\le t \le T^*$,
  \begin{eqnarray*}
  \|e_{p^i}\| \le C h^{r+1}.
 \end{eqnarray*}
 Since $\|e_{p^i}\|$ is a continuous function with respect to
 $t\in [0,T]$, due to the uniform continuity with time, then for any $\epsilon >0$, there exists $\delta$ such that for any $t\in[T^*, T^*+\delta]$,
 \begin{eqnarray*}
  \|e_{p^i}(t)-e_{p^i}(T^*)\| \le \epsilon.
 \end{eqnarray*}
 This means
 \begin{eqnarray*}
   \|e_{p^i}(t)\|\le \epsilon + Ch^{r+1} \le C h^{r+1}.
 \end{eqnarray*}
 Because $[0,T]$ is a finite interval, so the induction hypothesis \eqref{ep-hypothesis} holds true for all $t\in[0, T]$. \\

 Therefore, for any $t\in[0, T]$, by projection estimate \eqref{projec-estimate1} and \eqref{ep-hypothesis}, we can easily get
 \begin{align}
  \|p^i-p_h^i\| \le \|e_{p^i}\|+\|\theta_{p^i}\| \le C h^{r+1}. \label{semi-pph-nnh}
 \end{align}
 Then \eqref{semi-estimat} is proved by combining \eqref{nablaphi-ep-en}, the projection estimate \eqref{projec-estimate2} and \eqref{semi-pph-nnh}. This completes the proof of Theorem \ref{Theore-semi}.
\end{proof}

 Now we turn to the full discretization scheme.

\subsection{Error Analysis for the Full discretization} \label{subsec-full}

\noindent
 \par In this subsection, we present the error estimate of the full discretization schemes \eqref{fph1}-\eqref{fph2}. %Following the similar analysis in Section \ref{subsec-semi}, for given $t=t^n$, our task is to derive the error estimates of the following error functions
 For given $t=t^n$, define the error functions
 \begin{eqnarray} \label{full_ep_phi}
  e_{p^i}^{n} = P_h^{i,n}-R_hp^{i,n}, ~ e_\phi^{n}=\Phi_h^{n}-R_h\phi^{n}, ~ \mbox{for} ~ n=0,1,2,\cdots, N.
 \end{eqnarray}

\begin{theorem} \label{theo-full-max}
  Let $(p^{i,n},\phi^n)$ and $(P_h^{i,n},\Phi_h^n)$  be the solutions  of \eqref{pnpw1}-\eqref{pnpw2} and \eqref{fph1}-\eqref{fph2}, respectively.
 Then there exists two positive constants $\tau_0$ and $h_0$ such that for any $n=0,1,\cdots,N$,
 \begin{eqnarray} \label{max-full-estimat}
  \max\limits_{0\le n\le N}\Big(\|P_h^{i,n}-p^{i,n}\| + \|\Phi_h^n-\phi^n\|\Big) \le C (\tau+h^{r+1}),
 \end{eqnarray}
 provided by $\tau <\tau_0$ and $h \le h_0$.
\end{theorem}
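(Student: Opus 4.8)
The plan is to mirror the semi-discrete argument of Theorem \ref{Theore-semi}, but now carrying the time-discretization error and running a two-parameter induction in $n$. First I would write, for each time level, $P_h^{i,n}-p^{i,n}=e_{p^i}^n+\theta_{p^i}^n$ and $\Phi_h^n-\phi^n=e_\phi^n+\theta_\phi^n$, so that by the projection estimates \eqref{projec-estimate1}--\eqref{projec-estimate2} it suffices to bound $\|e_{p^i}^n\|$ and $\|e_\phi^n\|$. Subtracting \eqref{fph1}--\eqref{fph2} from the weak form \eqref{pnpw1}--\eqref{pnpw2} evaluated at $t=t^{n+1}$, and using the Ritz projection identities \eqref{Rhprojetion-1}--\eqref{Rhprojetion-2}, I would derive the error equations for $(e_{p^i}^{n+1},e_\phi^{n+1})$. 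The new feature compared with the semi-discrete case is the consistency/truncation term
\[
  (D_\tau p^{i,n+1}-\partial_t p^{i,n+1},v_h),
\]
which by Taylor expansion and the regularity assumption \eqref{Reg_assumption} (in particular the bound on $p^i_{tt}$) is $O(\tau)$ in $L^2$; this produces the $\tau$ contribution in \eqref{max-full-estimat}. Testing the potential equation with $w_h=e_\phi^{n+1}$ gives, exactly as in \eqref{nablaphi-ep-en}, $\|\nabla e_\phi^{n+1}\|\le C\sum_i\|e_{p^i}^{n+1}\|$.

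Next I would test the concentration error equation with $v_h=e_{p^i}^{n+1}$. The key algebraic point is the discrete analogue of $(\partial_t e_{p^i},e_{p^i})$: here $(D_\tau e_{p^i}^{n+1},e_{p^i}^{n+1})\ge \tfrac{1}{2\tau}(\|e_{p^i}^{n+1}\|^2-\|e_{p^i}^n\|^2)$, which is what lets Gronwall close in discrete time. The right-hand side splits into terms analogous to $I_1,I_2,I_3$ of the semi-discrete proof: the truncation term (handled above), a term $-q^i(p^{i,n+1}\nabla e_\phi^{n+1},\nabla e_{p^i}^{n+1})$ bounded using $\|\nabla e_\phi^{n+1}\|\le C\sum_i\|e_{p^i}^{n+1}\|$ and Young's inequality with a small $\epsilon$ to absorb $\|\nabla e_{p^i}^{n+1}\|^2$, and the genuinely nonlinear term $-q^i((e_{p^i}^{n+1}+\theta_{p^i}^{n+1})\nabla\Phi_h^{n+1},\nabla e_{p^i}^{n+1})$, which requires an $L^\infty$ bound on $\nabla\Phi_h^{n+1}$. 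As in \eqref{phi-infty}, I would view $\Phi_h^{n+1}$ as the finite element solution of a Poisson problem with right-hand side $\sum_i q^i P_h^{i,n+1}+F_3^{n+1}$, and use the $W^{1,p}$ finite-element estimate together with Lemmas \ref{G-N-inequality} and \ref{poisson-regEstimate} to obtain $\|\nabla\Phi_h^{n+1}\|_{0,\infty}\le C(\sum_i\|\nabla e_{p^i}^{n+1}\|+C_1)$. This feeds a term of the form $C\sum_i\|e_{p^i}^{n+1}\|\,\|\nabla e_{p^i}^{n+1}\|^2$ into the estimate, just as in \eqref{semi-I3}.

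After summing over $n$ and multiplying by $\tau$, I would arrive at an inequality of the shape
\[
  \|e_{p^i}^{m+1}\|^2+\tau\sum_{n=0}^{m}\|\nabla e_{p^i}^{n+1}\|^2
  \le C\tau\sum_{n=0}^{m}\Big(\sum_i\|e_{p^i}^{n+1}\|\,\|\nabla e_{p^i}^{n+1}\|^2+\|e_{p^i}^{n+1}\|^2\Big)+C(\tau+h^{r+1})^2,
\]
using $e_{p^i}^0=0$ from the choice $P_h^{i,0}=R_hp^{i,0}$. The cubic term is dangerous, so I would run a mathematical induction on $n$: assuming $\|e_{p^i}^n\|\le C_*(\tau+h^{r+1})$ for all $n\le m$ with a small enough $C_*(\tau+h^{r+1})$, the factor $\sum_i\|e_{p^i}^{n+1}\|$ in front of $\|\nabla e_{p^i}^{n+1}\|^2$ becomes smaller than $\epsilon$, so that term is absorbed into the left-hand side; then the discrete Gronwall inequality yields $\|e_{p^i}^{m+1}\|\le C(\tau+h^{r+1})$, closing the induction (this is where the smallness thresholds $\tau_0,h_0$ enter, e.g. via $C h^{r+1}\le\epsilon$ and a mild condition so the induction bootstraps). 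Combining with $\|\nabla e_\phi^{n+1}\|\le C\sum_i\|e_{p^i}^{n+1}\|$, Poincaré, and the projection estimates \eqref{projec-estimate1}--\eqref{projec-estimate2} gives \eqref{max-full-estimat}. The main obstacle is precisely the control of the nonlinear term $(e_{p^i}^{n+1}\nabla\Phi_h^{n+1},\nabla e_{p^i}^{n+1})$ and the cubic quantity it generates; everything hinges on the $L^\infty$ bound for $\nabla\Phi_h^{n+1}$ and on setting up the induction so that the smallness of $\|e_{p^i}^n\|$ is available to tame it.
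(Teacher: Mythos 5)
Your proposal follows the paper's overall strategy --- Ritz-projection splitting, testing the two error equations with $e_{p^i}^{n+1}$ and $e_\phi^{n+1}$, Taylor expansion of the truncation term $D_\tau p^{i,n+1}-\partial_t p^i|_{t^{n+1}}$, an $L^\infty$ bound on $\nabla\Phi_h^{n+1}$, and an induction closed by the discrete Gronwall inequality --- and it is correct in all of those respects. The one step you do genuinely differently is the $L^\infty$ gradient bound for the discrete potential. You recycle the semi-discrete argument (elliptic regularity via Lemma \ref{poisson-regEstimate}, Gagliardo--Nirenberg, and the $W^{1,p}$ finite element estimate), which gives $\|\nabla\Phi_h^{n+1}\|_{0,\infty}\le C\big(\sum_i\|\nabla e_{p^i}^{n+1}\|+C_1\big)$ and therefore reintroduces the cubic term $\|e_{p^i}^{n+1}\|\,\|\nabla e_{p^i}^{n+1}\|^2$, which you must then absorb using smallness of $\|e_{p^i}^{n+1}\|$. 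The paper instead writes $\|\nabla\Phi_h^{n+1}\|_{0,\infty}\le \|\nabla\phi^{n+1}\|_{0,\infty}+Ch^r\|\phi^{n+1}\|_{r+1,\infty}+Ch^{-d/2}\|\nabla e_\phi^{n+1}\|$ using an inverse inequality (see \eqref{full-phi-infty}), so that under the induction hypothesis one gets the uniform bound $\|\nabla\Phi_h^{n+1}\|_{0,\infty}\le C$ and the nonlinear term $H_2^n$ stays purely quadratic --- no cubic term ever appears. Your route avoids the factor $h^{-d/2}$ and hence needs only $C_*(\tau+h^{r+1})$ below a fixed threshold, whereas the paper's route needs $h^{-d/2}(\tau+h^{r+1})\le C$, which is precisely where the thresholds $\tau_0,h_0$ in the statement come from. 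Both are viable.

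One caveat: in discrete time the absorption of the cubic term at the newest level $n=m$ involves the factor $\|e_{p^i}^{m+1}\|$, which your induction hypothesis (stated for levels up to $m$) does not supply; to close it rigorously you would need either to extend the hypothesis to level $m+1$ with a larger constant and then improve it, or to control the top-level factor by an inverse estimate together with a time-step restriction. The paper's own induction has the same off-by-one feature at the top level, so this is a shared bookkeeping issue rather than a defect specific to your argument, but it is the one place where your write-up, as stated, does not literally close.
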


\begin{proof}
 By the weak formulation \eqref{pnpw1}-\eqref{pnpw2} and the Ritz projection \eqref{Rhprojetion-1}-\eqref{Rhprojetion-2}, $\forall v_{h}, w_h \in V_h^r$, we have
 \begin{align} \nonumber
  (D_\tau p^{i,n+1},v_{h})&+(\nabla R_hp^{i,n+1},\nabla v_{h})
     +q^i(p^{i,n+1}\nabla R_h\phi^{n+1},\nabla v_{h}) \\ \label{fwpro-1}
     &= (D_\tau p^{i,n+1}, v_h)-(\partial_tp^i|_{t^{n+1}},v_{h})
        +(F_i^{n+1},v_h), \\ \label{fwpro-2}
  (\nabla R_h\phi^{n+1},\nabla w_h) &= \big(\sum\limits_{i=1}^2
     q^iR_hp^{i,n+1}, w_h \big)+(F_3^{n+1},w_h).
 \end{align}
 Then from \eqref{fwpro-1}-\eqref{fwpro-2} and the full discretization schemes \eqref{fph1}-\eqref{fph2}, we have
\begin{align} \nonumber
  \big(D_\tau &(P_h^{i,n+1}-p^{i,n+1}),v_{h}\big)
  +\big(\nabla(P_h^{i,n+1}-R_hp^{i,n+1}),\nabla v_{h}\big) \\
  & \quad= -q^i(P_h^{i,n+1}\nabla\Phi_h^{n+1}-p^{i,n+1}\nabla
          R_h\phi^{n+1},\nabla v_{h})
     -(D_\tau p^{i,n+1},v_{h})+(\partial_tp|_{t^{n+1}}, v_{h}), \label{fwpro-11} \\
  &(\nabla(\Phi_h^{n+1}-R_h\phi^{n+1}),\nabla w_h) = \sum\limits_{i=1}^2q^i\big(P_h^{i,n+1}-R_hp^{i,n+1}, w_h\big).
      \label{fwpro-21}
 \end{align}
 Choosing $v_h = e_{p^i}^{n+1}$ in \eqref{fwpro-11} and $w_h = e_\phi^{n+1}$ in \eqref{fwpro-21}, respectively, we get
 \begin{align}  \label{fwpro-f1}
  \big(D_\tau e_{p^i}^{n+1}, e_{p^i}^{n+1}\big)
   +\big(\nabla e_{p^i}^{n+1},\nabla e_{p^i}^{n+1}\big)
  = H_1^n+H_2^n+H_3^n,  \\
  (\nabla e_\phi^{n+1},\nabla e_\phi^{n+1})
  = \big(\sum\limits_{i=1}^2 q^i e_{p^i}^{n+1}, e_\phi^{n+1}\big),
      \label{fwpro-f2}
 \end{align}
 where
 \begin{align*}
  H_1^n &:= -(D_\tau\theta_{p^i}^{n+1},e_{p^i}^{n+1}), \\
  H_2^n &:= -q^i(P_h^{i,n+1}\nabla\Phi_h^{n+1}
            -p^{i,n+1}\nabla R_h\phi^{n+1},
               \nabla e_{p^i}^{n+1}), \\
  H_3^n &:= -(D_\tau p^{i,n+1}-\partial_tp^i|_{t^{n+1}},
              e_{p^i}^{n+1}).
 \end{align*}
 By \eqref{fwpro-f2}, we can easily get
 \begin{align} \label{full-nablaPHI}
 \|\nabla e_\phi^{n+1}\| \le C\sum\limits_{i=1}^2\|e_{p^i}^{n+1}\|.
 \end{align}
 Now we focus on deriving the estimates of $H_1^n,H_2^n$ and $H_3^n$.

 First by the projection estimate \eqref{projec-estimate3}, we have
 \begin{eqnarray} \nonumber
  H_1^n &\le& \frac{1}{\tau}\|\theta_{p^i}^{n+1}-\theta_{p^i}^{n}\|
        \|e_{p^i}^{n+1}\|
  \le\frac{1}{\tau}\int_{t^{n}}^{t^{n+1}}\|\partial_s(\theta_{p^i})(s)\|ds\cdot
      \|e_{p^i}^{n+1}\|  \\
  &\le& C h^{r+1}\|e_{p^i}^{n+1}\|
  \le C( h^{2r+2} +\|e_{p^i}^{n+1}\|^2), \label{fwpro-f3}
 \end{eqnarray}
 For the third term $H_3^n$, by Taylor's expansion, it yields
 \begin{align} \nonumber
  H_3^n &\le \|D_\tau p^{i,n+1}-\partial_tp^i|_{t^{n+1}}\|
            \|e_{p^i}^{n+1}\|
  \le  \|\frac{1}{2}\tau\cdot\partial_{tt}p^i(x,\xi)\|
        \|e_{p^i}^{n+1}\|  \\ \label{fwpro-f4}
  &\le C \tau \|e_{p^i}^{n+1}\| \le C (\tau^2+
         \|e_{p^i}^{n+1}\|^2), \quad (t^{n}<\xi<t^{n+1}).
 \end{align}
 For $H_2^n$, there holds
 \begin{align} \nonumber
  H_2^n
 &=-q^i\big((P_h^{i,n+1}-p^{i,n+1})\nabla\Phi_h^{n+1}
    +p^{i,n+1}\nabla(\Phi_h^{n+1}-R_h\phi^{n+1}),
     \nabla e_{p^i}^{n+1}\big) \\ \nonumber
 &= -q^i\big((e_{p^i}^{n+1}+\theta_{p^i}^{n+1})\nabla\Phi_h^{n+1},
         \nabla e_{p^i}^{n+1}\big)
    -q^i\big(p^{i,n+1}\nabla e_\phi^{n+1},\nabla e_{p^i}^{n+1}\big) \\ \nonumber
 &\le C \Big((\|e_{p^i}^{n+1}\|+h^{r+1})\|\nabla\Phi_h^{n+1}\|_{0,\infty}
      \|\nabla e_{p^i}^{n+1}\|+ \sum\limits_{i=1}^2\|e_{p^i}^{n+1}\|
      \|\nabla e_{p^i}^{n+1}\| \Big),  %\label{fwpro-f5}
 \end{align}
 where we have used \eqref{Reg_assumption}, \eqref{projec-estimate1} and \eqref{full-nablaPHI}.

 On the other hand, by inverse inequality and \eqref{full-nablaPHI},
 \begin{align} \nonumber
 \|\nabla\Phi_h^{n+1}\|_{0,\infty}
 &\le  \|\nabla R_h\phi^{n+1}\|_{0,\infty}+\|\nabla(\Phi_h^{n+1}
        -R_h\phi^{n+1})\|_{0,\infty}   \\ \nonumber
 &\le  \|\nabla\phi^{n+1}\|_{0,\infty}
       +\|\nabla(R_h\phi^{n+1}-\phi^{n+1})\|_{0,\infty}
       +\|\nabla(\Phi_h^{n+1}-R_h\phi^{n+1})\|_{0,\infty} \\ \nonumber
 &\le  \|\phi\|_{L^\infty(W^{1,\infty}(\Omega))}
      +C\big(h^r\|\phi^{n+1}\|_{r+1,\infty}
      +h^{-\frac{d}{2}}\|\nabla(\Phi_h^{n+1}-R_h\phi^{n+1})\| \big) \\
 &\le C(1 + h^{-\frac{d}{2}}\|\nabla e_\phi^{n+1}\|)
      \le C(1+ h^{-\frac{d}{2}}\sum\limits_{i=1}^2\|e_{p^i}^{n+1}\|).
      \label{full-phi-infty}
 \end{align}

 In what follows, we shall prove by mathematical induction that the following inequality holds for $n = 0,1,\cdots, N-1$
 \begin{align} \label{full-hypothesis}
  \|e_{p^i}^{n+1}\| \le C (\tau+h^{r+1}).
 \end{align}
 Assume \eqref{full-hypothesis} holds for any
 $n=0,1,\cdots,J,~ 0\le J\le N-2$. Then by \eqref{full-phi-infty}, we get $\|\nabla\Phi_h^{n+1}\|_{0,\infty} \le C $. Hence,
 \begin{align} \nonumber
  H_2^n
 &\le C \Big((\|e_{p^i}^{n+1}\|+ h^{r+1})
      \|\nabla e_{p^i}^{n+1}\|+ \sum\limits_{i=1}^2\|e_{p^i}^{n+1}\|
      \|\nabla e_{p^i}^{n+1}\|\Big) \\
 &\le C (\|e_{p^i}^{n+1}\|^2+h^{2r+2})
      +\epsilon\|\nabla e_{p^i}^{n+1}\|^2.
     \label{H2-hypro}
 \end{align}
 Combining \eqref{fwpro-f1}, \eqref{fwpro-f3}, \eqref{fwpro-f4} and \eqref{H2-hypro}, we have
 \begin{align} \label{fwpro-f6}
  \frac{1}{2}D_\tau \|e_{p^i}^{n+1}\|^2
       +\|\nabla e_{p^i}^{n+1}\|^2
  &\le C (\tau^2+h^{2r+2} +\|e_{p^i}^{n+1}\|^2)
       +\epsilon\|\nabla e_{p^i}^{n+1}\|^2.
 \end{align}
 Choosing a sufficiently small $\epsilon$ and summing up for the index $n=0,1,\cdots,J$, $0\leq J\leq N-1$ on both side of \eqref{fwpro-f6}, then we can easily get the following inequality
 \begin{align*}
  \|e_{p^i}^{J+1}\|^2+\tau\sum\limits_{n=0}^J
  \|\nabla e_{p^i}^{n+1}\|^2
  \le C (\tau^2+h^{2r+2})+C\tau\sum\limits_{n=0}^J\|e_{p^i}^{n+1}\|^2.
 \end{align*}
 By the discrete Gronwall's inequality, we get
 \begin{align}
  \|e_{p^i}^{J+1}\|^2+\tau\sum\limits_{n=0}^J
  \|\nabla e_{p^i}^{n+1}\|^2
  \le C (\tau^2+h^{2r+2}).
  \label{fwpro-f6-2}
 \end{align}
 This implies that
 \begin{eqnarray*}
 \|e_{p^i}^{J+1}\| \le C (\tau+h^{r+1}),~ 0 \le J \le N-1.
 \end{eqnarray*}
 Thus, \eqref{full-hypothesis} holds for $n=0,1,\cdots, N-1$. We complete the induction.

 Finally, by projection error estimate \eqref{projec-estimate1} and \eqref{full-hypothesis}, it yields
 \begin{eqnarray} \label{Ph-p-full}
  \max\limits_{0\le n\le N}\|P_h^{i,n}-p^{i,n}\|
  \le \max\limits_{0\le n\le N}\big(\|\theta_{p^i}^{n}\|+\|e_{p^i}^{n}\|\big)
  \le C(\tau+h^{r+1}).
 \end{eqnarray}
 Theorem \ref{theo-full-max} is proved by combining \eqref{Ph-p-full}, the projection error estimate \eqref{projec-estimate2} and \eqref{full-nablaPHI}.
\end{proof}

\begin{remark}
  Theorem \ref{theo-full-max} show that if we choose the time step $\tau$ and mesh size $h$ satisfy $\tau=O(h^{r+1})$, then the optimal $L^2$ norm error estimate is obtained when $r$-order finite element is used for both the concentration and electrostatic potential. In fact, choosing $v_h = D_\tau e_{p^i}^{n+1}$ in \eqref{fwpro-11} instead of $e_{p^i}$ and follow the analogous arguments in $H^1$ norm for the concentration and electrostatic potential in \cite{Y.Z. Sun 16}, we can prove the error estimate in $H^1$ norm, i.e.
 \begin{eqnarray} \label{full-H1-estimat}
  \|P_h^{i,n}-p^{i,n}\|_1 + \|\Phi_h^n-\phi^n\|_1 \le C (\tau+h^{r}).
 \end{eqnarray}

 Since the optimal $H^1$ norm error estimate is proved in \cite{Y.Z. Sun 16} and the similar proof fashion shall be presented in Section \ref{sec4}, for the sake of simplicity, the detained proof of \eqref{full-H1-estimat} is omitted here.
\end{remark}

 Next, a two-grid finite element method for PNP equation \eqref{pnpt1} will be presented in full discretization schemes. Some error estimates are derived  which show our method can achieve the same error accuracy as the standard finite element method. However, a much less CPU time cost which is shown by the numerical experiments in Section \ref{sec-num-res}.

 \setcounter{equation}{0}
 \section{The Two-Grid Algorithm and Error Analysis} \label{sec4}

\noindent
 \par In this section, we shall present the main algorithms of the paper. Two quasi-uniform triangulations $\mathcal{T}_H$ and $\mathcal{T}_h$ of $\Omega$ with two different  mesh sizes $H$ and $h \ (H > h)$ are introduced. The corresponding finite element spaces $V_H^r$ and $V_h^r$, which satisfy $V_H^r\subset V_h^r$ are called the coarse-grid and fine-grid space, respectively. Two algorithms are provided to decouple the strong coupled equations and some error estimates are also derived.

 First, a semi-decoupled scheme is presented as follows:
\begin{algo}\label{al-FI}(Semi-decoupled scheme)\\
 Step 1. Given $(P_H^{i,n},\Phi_H^n)\in V_H^r$, $i=1,2$, find $(P_H^{i,n+1}, \Phi_H^{n+1})\in V_H^r$,  such that
 \begin{align} \label{aIF11}
 (D_\tau P_H^{i,n+1},v_H)+(\nabla P_H^{i,n+1},\nabla v_H)
 +(q^iP_H^{i,n+1}\nabla\Phi_H^{n+1},\nabla v_H)
 &= (F_i^{n+1},v_H),\quad \forall v_H\in V_H^r, \\ \label{aIF12}
 (\nabla\Phi_H^{n+1},\nabla w_H)-\sum\limits_{i = 1}^2 q^i(P_H^{i,n+1},w_H)
 &= (F_3^{n+1},w_H),\quad \forall w_H\in V_H^r.
 \end{align}
 Step 2. Given  $P_H^{i,n+1} \in V_H^r$ and $(P_{h*}^{i,n},\Phi_{h*}^n)\in V_h^r$, find $(P_{h*}^{i,n+1},\Phi_{h*}^{n+1})\in V_h^r$, such that
 \begin{align} \label{aIF21}
 (D_\tau P_{h*}^{i,n+1},v_h)+(\nabla P_{h*}^{i,n+1},\nabla v_h)+(q^iP_{h*}^{i,n+1}\nabla\Phi_{h*}^{n+1},\nabla v_h)
 &= (F_i^{n+1},v_h),\quad \forall v_h\in V_h^r, \\
  \label{aIF22}
 (\nabla\Phi_{h*}^{n+1},\nabla w_h)-\sum\limits_{i = 1}^2 q^i(P_H^{i,n+1},w_h)
 &= (F_3^{n+1},w_h),\quad \forall w_h\in V_h^r,
 \end{align}
 where
 $$D_\tau P_H^{i,n+1}=\frac{P_H^{i,n+1}-P_H^{i,n}} { \tau} , ~\mbox{for}~ n=0,1,2,\cdots,N-1,$$
 and
  $$D_\tau P_{h*}^{i,n+1}=\frac{P_{h*}^{i,n+1}-P_{h*}^{i,n}} { \tau} , ~\mbox{for}~ n=0,1,2,\cdots,N-1.$$
  At the initial step, we take $p_{h*}^{i,0} = R_hp^{i,0}$, where $R_h$ is the Ritz projection operator defined in \eqref{Rhprojetion-1}-\eqref{Rhprojetion-2}.
 \end{algo}

 We need the following error estimate in the later analysis.
\begin{lemma} \label{lemma-tph-p}
  Let $(p^{i,n+1},\phi^{n+1})$ and $(P_{h*}^{i,n+1},\Phi_{h*}^{n+1})$  be the solutions  of \eqref{pnpw1}-\eqref{pnpw2} and \eqref{aIF21}-\eqref{aIF22}, respectively. Then for any $n=0,1,\cdots,N-1$, we have
 \begin{align} \label{TG-full-F01}
  \|P_{h*}^{i,n+1}-p^{i,n+1}\|  &\le C (\tau+H^{r+1}).
 \end{align}
\end{lemma}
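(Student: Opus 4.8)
The plan is to decompose the error $P_{h*}^{i,n+1}-p^{i,n+1}$ using the Ritz projection $R_h$ as an intermediary, writing $P_{h*}^{i,n+1}-p^{i,n+1}=(P_{h*}^{i,n+1}-R_hp^{i,n+1})+(R_hp^{i,n+1}-p^{i,n+1})=:\tilde{e}_{p^i}^{n+1}+\theta_{p^i}^{n+1}$, and similarly for the potential with $\tilde{e}_\phi^{n+1}:=\Phi_{h*}^{n+1}-R_h\phi^{n+1}$. Since $\|\theta_{p^i}^{n+1}\|\le Ch^{r+1}\le CH^{r+1}$ by \eqref{projec-estimate1}, it suffices to bound $\|\tilde{e}_{p^i}^{n+1}\|$. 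First I would subtract the scheme \eqref{aIF21}--\eqref{aIF22} from the projected weak form \eqref{fwpro-1}--\eqref{fwpro-2} (which holds on $V_h^r$ and hence applies with test functions $v_h,w_h\in V_h^r$), obtaining an error equation that is structurally identical to \eqref{fwpro-11}--\eqref{fwpro-21} except that the Poisson equation \eqref{aIF22} uses the \emph{coarse} concentration $P_H^{i,n+1}$ as source rather than $P_{h*}^{i,n+1}$. Taking $w_h=\tilde{e}_\phi^{n+1}$ in the potential equation now gives, in place of \eqref{full-nablaPHI},
\begin{align*}
 \|\nabla\tilde{e}_\phi^{n+1}\|\le C\sum_{i=1}^2\|P_H^{i,n+1}-R_hp^{i,n+1}\|\le C\sum_{i=1}^2\Big(\|P_H^{i,n+1}-p^{i,n+1}\|+\|\theta_{p^i}^{n+1}\|\Big)\le C(\tau+H^{r+1}),
\end{align*}
where the bound on $\|P_H^{i,n+1}-p^{i,n+1}\|$ is exactly Theorem \ref{theo-full-max} applied on the coarse grid with mesh size $H$.

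Next I would take $v_h=\tilde{e}_{p^i}^{n+1}$ in the concentration error equation, producing an identity of the same shape as \eqref{fwpro-f1} with terms $H_1^n,H_2^n,H_3^n$. The estimates of $H_1^n$ (via \eqref{projec-estimate3}) and $H_3^n$ (via Taylor expansion, giving $O(\tau)$) carry over verbatim from the proof of Theorem \ref{theo-full-max}, contributing $C(\tau^2+h^{2r+2}+\|\tilde{e}_{p^i}^{n+1}\|^2)$. The term $H_2^n=-q^i\big((\tilde{e}_{p^i}^{n+1}+\theta_{p^i}^{n+1})\nabla\Phi_{h*}^{n+1},\nabla\tilde{e}_{p^i}^{n+1}\big)-q^i\big(p^{i,n+1}\nabla\tilde{e}_\phi^{n+1},\nabla\tilde{e}_{p^i}^{n+1}\big)$ now splits so that the second piece is controlled directly by the coarse-grid bound $\|\nabla\tilde{e}_\phi^{n+1}\|\le C(\tau+H^{r+1})$ just established, yielding $C(\tau^2+H^{2r+2})+\epsilon\|\nabla\tilde{e}_{p^i}^{n+1}\|^2$; for the first piece I would again need $\|\nabla\Phi_{h*}^{n+1}\|_{0,\infty}\le C$, obtained by the same inverse-inequality argument as \eqref{full-phi-infty} combined with a mathematical induction hypothesis $\|\tilde{e}_{p^i}^{n+1}\|\le C(\tau+H^{r+1})$ (using $H=\mathcal{O}(h^{?})$ implicitly through $h\le H$, or more precisely using that $\|\nabla\tilde{e}_\phi^{n+1}\|$ is already small and $h^{-d/2}\|\nabla\tilde e_\phi^{n+1}\|$ stays bounded under the stated mesh relation). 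Assembling these gives
\begin{align*}
 \tfrac12 D_\tau\|\tilde{e}_{p^i}^{n+1}\|^2+\|\nabla\tilde{e}_{p^i}^{n+1}\|^2\le C(\tau^2+H^{2r+2}+\|\tilde{e}_{p^i}^{n+1}\|^2)+\epsilon\|\nabla\tilde{e}_{p^i}^{n+1}\|^2,
\end{align*}
and then summing over $n$, absorbing the gradient term, using $\tilde{e}_{p^i}^0=0$, and invoking the discrete Gronwall inequality closes the induction and gives $\|\tilde{e}_{p^i}^{n+1}\|\le C(\tau+H^{r+1})$. The triangle inequality with \eqref{projec-estimate1} then yields \eqref{TG-full-F01}.

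The main obstacle I anticipate is the $L^\infty$ bound on $\nabla\Phi_{h*}^{n+1}$: unlike in Theorem \ref{theo-full-max}, the Poisson equation for $\Phi_{h*}^{n+1}$ has source $\sum q^iP_H^{i,n+1}+F_3^{n+1}$ living on the coarse grid, so one must be careful that the inverse-inequality step $h^{-d/2}\|\nabla\tilde{e}_\phi^{n+1}\|$ does not destroy the estimate — this is precisely where a mesh constraint relating $H$ and $h$ (of the type $H=\mathcal{O}(h^{1/2})$ mentioned in the introduction) enters, and the induction must be set up so that the smallness of $\|\nabla\tilde{e}_\phi^{n+1}\|\le C(\tau+H^{r+1})$ compensates the negative power of $h$. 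Everything else is a routine adaptation of the arguments already carried out for Theorems \ref{Theore-semi} and \ref{theo-full-max}.
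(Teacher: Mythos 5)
Your proposal matches the paper's proof essentially step for step: the same splitting via the Ritz projection, the same observation that testing the potential error equation with $\rho_\phi^{n+1}$ gives $\|\nabla\rho_\phi^{n+1}\|\le\sum_i\|P_H^{i,n+1}-R_hp^{i,n+1}\|\le C(\tau+H^{r+1})$ by applying Theorem \ref{theo-full-max} on the coarse grid, and the same treatment of the three right-hand-side terms followed by summation and the discrete Gronwall inequality. The only cosmetic difference is that the paper does not set up a mathematical induction for the bound $\|\nabla\Phi_{h*}^{n+1}\|_{0,\infty}\le C$ — it obtains it directly from \eqref{TG-ff1} together with the argument of \eqref{full-phi-infty}, since the gradient of the potential error is already controlled by the (known) coarse-grid accuracy rather than by the quantity being estimated.
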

\begin{proof}
 Denote
 \begin{eqnarray}\label{tgph-p_phih-phi}
  P_{h*}^{i,n+1}-p^{i,n+1} = \rho_{p^i}^{n+1}+\theta_{p^i}^{n+1}, ~ \Phi_{h*}^{n+1}-\phi^{n+1}= \rho_{\phi}^{n+1}+\theta_{\phi}^{n+1},
 \end{eqnarray}
 where
 \begin{align*}
 &\rho_{p^i}^{n+1}=P_{h*}^{i,n+1}-R_hp^{i,n+1}, ~ \theta_{p^i}^{n+1}=R_hp^{i,n+1}-p^{i,n+1}, \\
 &\rho_{\phi}^{n+1} = \Phi_{h*}^{n+1}-R_h\phi^{n+1}, ~ \quad\theta_\phi^{n+1}=R_h\phi^{n+1}-\phi^{n+1}.
 \end{align*}

 Following a similar proof of Theorem \ref{theo-full-max}, subtracting \eqref{fwpro-1} from \eqref{aIF21}, and taking $v_h=\rho_{p^i}^{n+1}$, we have the error equation
 \begin{align} \label{TG-3}
  \big(D_\tau\rho_{p^i}^{n+1},\rho_{p^i}^{n+1}\big)
  +\big(\nabla\rho_{p^i}^{n+1},\nabla \rho_{p^i}^{n+1}\big)= T_1^n+T_2^n+T_3^n,
 \end{align}
 where
 \begin{align*}
 & T_1^n = -\big(D_\tau\theta_{p^i}^{n+1},\rho_{p^i}^{n+1}\big), \\
 & T_2^n = -(D_\tau p^{i,n+1}-\partial_tp|_{t^{n+1}}, \rho_{p^i}^{n+1}), \\
 & T_3^n = -q^i\big((\rho_{p^i}^{n+1}+\theta_{p^i}^{n+1})\nabla\Phi_{h*}^{n+1},
         \nabla \rho_{p^i}^{n+1}\big) -q^i\big(p^{i,n+1}\nabla\rho_\phi^{n+1},\nabla\rho_{p^i}^{n+1}\big).
 \end{align*}
 We shall estimate $T_1^n, T_2^n$ and $T_3^n$, respectively below.

 By the similar arguments as in \eqref{fwpro-f3}-\eqref{fwpro-f4}, we get
 \begin{align} \label{tg711}
  T_1^n &\le C(h^{2r+2}+\|\rho_{p^i}^{n+1}\|^2), \\
  T_2^n &\le C(\tau^2+\|\rho_{p^i}^{n+1}\|^2).
   \label{tg712}
 \end{align}
 To estimate the third term, $T_3^n$, we need the fact $\|\nabla\Phi_{h*}^{n+1}\|_{0,\infty}\le C$,  and the estimate of $\|\nabla\rho_\phi^{n+1}\|$.

 In fact, by \eqref{fwpro-2} and \eqref{aIF22}, $\forall w_h \in V_h^r$, we have
 \begin{align}\label{TG-4}
  (\nabla\rho_{\phi}^{n+1},\nabla w_h)
  = \sum\limits_{i=1}^2q^i\big(P_H^{i,n+1}-R_hp^{i,n+1}, w_h\big).
 \end{align}
 Taking $w_h = \rho_{\phi}^{n+1}$ in \eqref{TG-4}, we can easily get
 \begin{align}\label{TG-ff1}
  \|\nabla\rho_{\phi}^{n+1}\|
  \le \sum\limits_{i=1}^2\|P_H^{i,n+1}-R_hp^{i,n+1}\|
  \le C(\tau+H^{r+1}).
 \end{align}
 Then $\|\nabla\Phi_{h*}^{n+1}\|_{0,\infty}\le C$ holds by using \eqref{TG-ff1} and the same arguments as in \eqref{full-phi-infty}. By the regularity assumption \eqref{Reg_assumption}, the projection estimate \eqref{projec-estimate1} and \eqref{TG-ff1}, $T_3^n$ is estimated by
 \begin{align} \nonumber
  T_3^n
  &\le C\big(\|\rho_{p^i}^{n+1}+\theta_{p^i}^{n+1}\|\|\nabla\Phi_{h*}^{n+1}\|_{0,\infty}
     \|\nabla\rho_{p^i}^{n+1}\| + \|p^{i,n+1}\|_{0,\infty}
     \|\nabla\rho_\phi^{n+1}\|\|\nabla\rho_{p^i}^{n+1}\| \big) \\ \nonumber
 &\le C \big((\|\rho_{p^i}^{n+1}\|+h^{r+1})\|\nabla\rho_{p^i}^{n+1}\|+
        (\tau+H^{r+1})\|\nabla\rho_{p^i}^{n+1}\| \big) \\
 &\le C \big(\tau^2+H^{2r+2}+\|\rho_{p^i}^{n+1}\|^2 \big)
        +\epsilon\|\nabla \rho_{p^i}^{n+1}\|^2.
      \label{TG-5}
 \end{align}
 Thus, by \eqref{tg711}, \eqref{tg712} and \eqref{TG-5}, equation \eqref{TG-3} becomes
 \begin{eqnarray}  \label{TG-6}
  \frac{1}{2}D_\tau\|\rho_{p^i}^{n+1}\|^2+\|\nabla\rho_{p^i}^{n+1}\|^2
  \le C(\tau^2+H^{2r+2}+\|\rho_{p^i}^{n+1}\|^2) + \epsilon\|\nabla\rho_{p^i}^{n+1}\|^2.
 \end{eqnarray}
 Applying a summation of time step $n$ from $0$ to $J$ on both side of \eqref{TG-6}, where $0\le J\le N-1$, we get the following inequality
 \begin{eqnarray*}
  \frac{1}{2\tau}\|\rho_{p^i}^{J+1}\|^2+\sum\limits_{n=0}^J \|\nabla\rho_{p^i}^{n+1}\|^2
  \le C\sum\limits_{n=0}^J\big(\tau^2+H^{2r+2}+\|\rho_{p^i}^{n+1}\|^2+ \epsilon\|\nabla\rho_{p^i}^{n+1}\big).
 \end{eqnarray*}
 Then by discrete Gronwall's inequality, it yields
 \begin{eqnarray*}
  \|\rho_{p^i}^{J+1}\|^2+\tau\sum\limits_{n=0}^J \|\nabla\rho_{p^i}^{n+1}\|^2
  \le C(\tau^2+H^{2r+2}).
 \end{eqnarray*}
 This implies that for $0\le J\le N-1$,
 $$\|\rho_{p^i}^{J+1}\| \le C(\tau+H^{r+1}).$$

 Finally, by triangle inequality and projection estimate \eqref{projec-estimate1}, for $n=0,1,\cdots,N-1$, we can easily get
 \begin{eqnarray*}
  \|P_{h*}^{i,n+1}-p^{i,n+1}\| \le \|\rho_{p^i}^{n+1}\|+\|\theta_{p^i}^{n+1}\|
   \le C(\tau+H^{r+1}).
 \end{eqnarray*}
 This completes the proof.
\end{proof}

\begin{theo} \label{theo-F1}
 Suppose $(p^{i,{n+1}},\phi^{n+1})$ and $(P_{h*}^{i,n+1},\Phi_{h*}^{n+1})$ are the solutions of \eqref{pnpw1}-\eqref{pnpw2} and \eqref{aIF21}-\eqref{aIF22}, respectively.
 Then for any $n=0,1,\cdots,N-1$, we have the following estimate
 \begin{eqnarray} \label{TG-H1-error}
  \|\Phi_{h*}^{n+1}-\phi^{n+1}\|_1 + \|P_{h*}^{i,n+1}-p^{i,n+1}\|_1
  \le C (\tau+ h^{r}+H^{r+1}).
 \end{eqnarray}
\end{theo}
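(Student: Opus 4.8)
\quad I would split \eqref{TG-H1-error} into its potential part and its concentration part; the first is essentially immediate and the second is the real content. For the potential, write $\Phi_{h*}^{n+1}-\phi^{n+1}=\rho_\phi^{n+1}+\theta_\phi^{n+1}$ as in \eqref{tgph-p_phih-phi}. Since $\rho_\phi^{n+1}\in V_h^r\subset H_0^1(\Omega)$, Poincar\'e's inequality together with the bound \eqref{TG-ff1} already obtained inside the proof of Lemma \ref{lemma-tph-p} gives $\|\rho_\phi^{n+1}\|_1\le C\|\nabla\rho_\phi^{n+1}\|\le C(\tau+H^{r+1})$, and combined with $\|\theta_\phi^{n+1}\|_1\le Ch^r$ from \eqref{projec-estimate2} this already yields $\|\Phi_{h*}^{n+1}-\phi^{n+1}\|_1\le C(\tau+h^r+H^{r+1})$. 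For the concentration, by \eqref{projec-estimate1} and the triangle inequality $\|P_{h*}^{i,n+1}-p^{i,n+1}\|_1\le C\|\nabla\rho_{p^i}^{n+1}\|+Ch^r$, so it suffices to prove $\|\nabla\rho_{p^i}^{n+1}\|\le C(\tau+h^{r+1}+H^{r+1})$ — the $h^r$ in \eqref{TG-H1-error} then coming solely from the projection term.

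To bound $\|\nabla\rho_{p^i}^{n+1}\|$ I would return to the error equation underlying \eqref{TG-3}, i.e. before the choice $v_h=\rho_{p^i}^{n+1}$, and test it instead with $v_h=D_\tau\rho_{p^i}^{n+1}$. Using $(\nabla\rho_{p^i}^{n+1},\nabla D_\tau\rho_{p^i}^{n+1})\ge\tfrac12 D_\tau\|\nabla\rho_{p^i}^{n+1}\|^2$ this produces
\[
 \|D_\tau\rho_{p^i}^{n+1}\|^2+\tfrac12 D_\tau\|\nabla\rho_{p^i}^{n+1}\|^2\le\widetilde T_1^n+\widetilde T_2^n+\widetilde T_3^n,
\]
where $\widetilde T_1^n,\widetilde T_2^n$ are the projection and Taylor–remainder terms tested against $D_\tau\rho_{p^i}^{n+1}$ — bounded, exactly as in \eqref{fwpro-f3}--\eqref{fwpro-f4}, by $C(h^{2r+2}+\tau^2)+\epsilon\|D_\tau\rho_{p^i}^{n+1}\|^2$ — and $\widetilde T_3^n=-(g^{n+1},\nabla D_\tau\rho_{p^i}^{n+1})$ with $g^{n+1}:=q^i\big((\rho_{p^i}^{n+1}+\theta_{p^i}^{n+1})\nabla\Phi_{h*}^{n+1}+p^{i,n+1}\nabla\rho_\phi^{n+1}\big)$.

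The term $\widetilde T_3^n$ is the main obstacle, since, unlike in Lemma \ref{lemma-tph-p}, $\|\nabla D_\tau\rho_{p^i}^{n+1}\|$ is no longer under control; I would handle it by summation by parts in time. Multiplying by $\tau$, summing $n=0,\dots,J$, and using $\rho_{p^i}^0=0$,
\[
 \tau\sum_{n=0}^{J}\widetilde T_3^n=-(g^{J+1},\nabla\rho_{p^i}^{J+1})+\tau\sum_{n=1}^{J}(D_\tau g^{n+1},\nabla\rho_{p^i}^{n}).
\]
The boundary term is $\le\epsilon\|\nabla\rho_{p^i}^{J+1}\|^2+C\|g^{J+1}\|^2$, and $\|g^{J+1}\|\le C(\tau+H^{r+1})$ follows from $\|\nabla\Phi_{h*}^{n+1}\|_{0,\infty}\le C$, the $L^2$ bounds $\|\rho_{p^i}^{n+1}\|\le C(\tau+H^{r+1})$ and $\|\nabla\rho_\phi^{n+1}\|\le C(\tau+H^{r+1})$ from Lemma \ref{lemma-tph-p}, and \eqref{projec-estimate1}. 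Expanding $D_\tau g^{n+1}$ by the discrete product rule: the piece carrying $D_\tau\rho_{p^i}^{n+1}$ (multiplied by the bounded factor $\nabla\Phi_{h*}^{n+1}$) is paired with $\nabla\rho_{p^i}^{n}$ and, via Young's inequality, absorbed into the left-hand side, the residual $C\tau\sum\|\nabla\rho_{p^i}^{n}\|^2$ being $\le C(\tau^2+H^{2r+2})$ by Lemma \ref{lemma-tph-p}; the pieces carrying $D_\tau\theta_{p^i}^{n+1}$ and $D_\tau p^{i,n+1}$ are controlled by \eqref{projec-estimate3} and \eqref{Reg_assumption}; and the pieces carrying $D_\tau(\nabla\Phi_{h*}^{n+1})$ and $D_\tau(\nabla\rho_\phi^{n+1})$ require a priori bounds on the discrete potential — for the first I would view $D_\tau\Phi_{h*}^{n+1}$ as the $V_h^r$ finite element solution of the Poisson problem with data $\sum_i q^iD_\tau P_H^{i,n+1}+D_\tau F_3^{n+1}$ and reuse the $W^{1,\infty}/W^{1,4}$ argument of \eqref{full-phi-infty} (needing only the standard stability of the coarse scheme for $D_\tau P_H^{i,n+1}$), and for the second I would difference \eqref{TG-4} in time and test with $D_\tau\rho_\phi^{n+1}$, so that $\|\nabla D_\tau\rho_\phi^{n+1}\|\le C\sum_i\|D_\tau(P_H^{i,n+1}-R_hp^{i,n+1})\|\le C(\tau+H^{r+1})$ by \eqref{projec-estimate3} and the coarse-grid counterpart of the time-difference analysis behind \eqref{full-H1-estimat}. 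Collecting terms, $\tau\sum_{n=0}^{J}\widetilde T_3^n\le\epsilon\|\nabla\rho_{p^i}^{J+1}\|^2+\epsilon\,\tau\sum_n\|D_\tau\rho_{p^i}^{n+1}\|^2+C(\tau^2+h^{2r+2}+H^{2r+2})$.

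Finally I would sum the differential inequality over $n$, telescope the gradient term using $\nabla\rho_{p^i}^{0}=0$, choose $\epsilon$ small to absorb the indicated terms, and invoke a discrete Gronwall inequality to obtain $\max_{0\le n\le N}\|\nabla\rho_{p^i}^{n}\|^2+\tau\sum_n\|D_\tau\rho_{p^i}^{n}\|^2\le C(\tau^2+h^{2r+2}+H^{2r+2})$, hence $\|\nabla\rho_{p^i}^{n+1}\|\le C(\tau+h^{r+1}+H^{r+1})$; together with $\|\theta_{p^i}^{n+1}\|_1\le Ch^r$ and the potential estimate this proves \eqref{TG-H1-error}. The step I expect to be genuinely delicate is the bookkeeping inside $D_\tau g^{n+1}$: obtaining the $L^\infty$ (or $L^4$) bound on $D_\tau(\nabla\Phi_{h*}^{n+1})$ and the $L^2$ bound on $\nabla D_\tau\rho_\phi^{n+1}$ cleanly enough that no inverse inequality is introduced that would degrade the rate below $\tau+h^r+H^{r+1}$, and correctly tracking which resulting sums are absorbed into the left-hand side versus bounded directly via Lemma \ref{lemma-tph-p}.
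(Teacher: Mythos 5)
Your proposal follows essentially the same route as the paper's proof: the potential bound via \eqref{TG-ff1} and the projection estimate \eqref{projec-estimate2}, testing the concentration error equation with $v_h=D_\tau\rho_{p^i}^{n+1}$, a discrete summation by parts in time on the nonlinear term whose boundary contribution is controlled by Lemma \ref{lemma-tph-p} and $\|\nabla\Phi_{h*}^{n+1}\|_{0,\infty}\le C$, and a concluding discrete Gronwall argument. The only difference is one of detail: you spell out the bounds on $D_\tau(\nabla\Phi_{h*}^{n+1})$ and $\nabla D_\tau\rho_\phi^{n+1}$ needed to control the time-differenced nonlinear term, which the paper compresses into a single asserted inequality.
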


\begin{proof}
 First by \eqref{tgph-p_phih-phi}, \eqref{TG-ff1} and the projection estimate \eqref{projec-estimate2}, it easily yields
 \begin{eqnarray} \label{TG-H1-error0722}
  \|\Phi_{h*}^{n+1}-\phi^{n+1}\|_1 \le C (\tau+ h^{r}+H^{r+1}).
 \end{eqnarray}
 Now we turn to estimate $\|P_{h*}^{i,n+1}-p^{i,n+1}\|_1$.

 Choosing $v_h = D_\tau\rho_{p^i}^{n+1}$ in \eqref{TG-3} instead of $\rho_{p^i}^{n+1}$, we get
 \begin{align} \label{TG-h11}
  \big(\nabla\rho_{p^i}^{n+1},D_\tau\nabla\rho_{p^i}^{n+1}\big)
  +\big(D_\tau\rho_{p^i}^{n+1},D_\tau\rho_{p^i}^{n+1}\big)
  = \sum\limits_{i=1}^3\hat H_i^n,
 \end{align}
 where
 \begin{align*}
 \hat H_1^n &= -\big(D_\tau\theta_{p^i}^{n+1},D_\tau\rho_{p^i}^{n+1}\big)
             \le C h^{2r+2}+\epsilon\|D_\tau\rho_{p^i}^{n+1}\|^2,  \\
 \hat H_2^n &= -(D_\tau p^{i,n+1}-\partial_tp|_{t^{n+1}}, D_\tau\rho_{p^i}^{n+1})
             \le C \tau^2+\epsilon\|D_\tau\rho_{p^i}^{n+1}\|^2, \\
 \hat H_3^n &= -q^i\big((P_{h*}^{i,n+1}-p^{i,n+1})\nabla\Phi_{h*}^{n+1}
               +p^{i,n+1}\nabla(\Phi_{h*}^{n+1}-R_h\phi^{n+1}),
               \nabla D_\tau\rho_{p^i}^{n+1}\big) \\
   &=-q^i\frac{1}{\tau}\big[\big((\rho_{p^i}^{n+1}
               +\theta_{p^i}^{n+1})\nabla\Phi_{h*}^{n+1},
                 \nabla(\rho_{p^i}^{n+1}-\rho_{p^i}^{n})\big)
               +\big(p^{i,n+1}\nabla\rho_\phi^{n+1},\nabla(\rho_{p^i}^{n+1}
               -\rho_{p^i}^{n})\big)\big].
 \end{align*}
 On the other hand, by \eqref{TG-full-F01}, \eqref{TG-ff1} and $\|\nabla\Phi_{h*}^{n+1}\|_{0,\infty}\le C$, there holds
 \begin{align*}
  \big((\rho_{p^i}^{n+1}+\theta_{p^i}^{n+1})\nabla\Phi_{h*}^{n+1},
    \nabla(\rho_{p^i}^{n+1}-\rho_{p^i}^{n})\big)
    &=\big((\rho_{p^i}^{n+1}+\theta_{p^i}^{n+1})\nabla\Phi_{h*}^{n+1},
            \nabla\rho_{p^i}^{n+1}\big)
       -\big((\rho_{p^i}^{n}+\theta_{p^i}^{n})\nabla\Phi_{h*}^{n},
            \nabla\rho_{p^i}^{n}\big) \\
    &\quad -\big((\rho_{p^i}^{n+1}+\theta_{p^i}^{n+1})\nabla\Phi_{h*}^{n+1}
           -(\rho_{p^i}^{n}+\theta_{p^i}^{n})\nabla\Phi_{h*}^{n},
              \nabla\rho_{p^i}^{n}\big) \\
    &\le \big((\rho_{p^i}^{n+1}+\theta_{p^i}^{n+1})\nabla\Phi_{h*}^{n+1},
            \nabla\rho_{p^i}^{n+1}\big)
           -\big((\rho_{p^i}^{n}+\theta_{p^i}^{n})\nabla\Phi_{h*}^{n},
            \nabla\rho_{p^i}^{n}\big) \\
    &\quad   +C(\tau^2+H^{2r+2}+\|\nabla\rho_{p^i}^{n}\|^2),
  \end{align*}
 and
 \begin{align*}
  \big(p^{i,n+1}\nabla\rho_\phi^{n+1},\nabla\rho_{p^i}^{n+1}
               -\rho_{p^i}^{n})\big)
    &= \big(p^{i,n+1}\nabla\rho_\phi^{n+1},\nabla\rho_{p^i}^{n+1}\big)
       -\big(p^{i,n}\nabla\rho_\phi^{n},\nabla\rho_{p^i}^{n}\big) \\
    &\quad -\big(p^{i,n+1}\nabla\rho_\phi^{n+1}-p^{i,n}\nabla\rho_\phi^{n},
                 \nabla\rho_{p^i}^{n}\big) \\
    &\le \big(p^{i,n+1}\nabla\rho_\phi^{n+1},\nabla\rho_{p^i}^{n+1}\big)
        -\big(p^{i,n}\nabla\rho_\phi^{n},\nabla\rho_{p^i}^{n}\big) \\
    &\quad +C(\tau^2+H^{2r+2}+\|\nabla\rho_{p^i}^{n}\|^2).
 \end{align*}
 Then the third term estimated by
 \begin{align*}
  \hat H_3^n
  &\le C\frac{1}{\tau}\big[\big((\rho_{p^i}^{n+1}
       +\theta_{p^i}^{n+1})\nabla\Phi_{h*}^{n+1},\nabla\rho_{p^i}^{n+1}\big)
       -\big((\rho_{p^i}^{n}+\theta_{p^i}^{n})\nabla\Phi_{h*}^{n},
               \nabla\rho_{p^i}^{n}\big) \\
  &\quad +\big(p^{i,n+1}\nabla\rho_\phi^{n+1},\nabla\rho_{p^i}^{n+1}\big)
       -\big(p^{i,n}\nabla\rho_\phi^{n},\nabla\rho_{p^i}^{n}\big)
       +\tau^2+H^{2r+2}+\|\nabla\rho_{p^i}^{n}\|^2\big].
 \end{align*}
 Inserting the error estimates of $\hat H_1^n, \hat H_2^n$ and $\hat H_3^n$ into \eqref{TG-h11}, it yields
 \begin{align} \nonumber
  \frac{1}{2}D_\tau\|\nabla\rho_{p^i}^{n+1}\|^2 &+\|D_\tau\rho_{p^i}^{n+1}\|^2
  \le C (\tau^2+h^{2r+2}+\|\nabla\rho_{p^i}^{n}\|^2)
    +\epsilon\|D_\tau\rho_{p^i}^{n+1}\|^2 \\ \nonumber
  &\quad +C\frac{1}{\tau}\big[\big((\rho_{p^i}^{n+1}
       +\theta_{p^i}^{n+1})\nabla\Phi_{h*}^{n+1},\nabla\rho_{p^i}^{n+1}\big)
       -\big((\rho_{p^i}^{n}+\theta_{p^i}^{n})\nabla\Phi_{h*}^{n},
               \nabla\rho_{p^i}^{n}\big) \\
  &\quad +\big(p^{i,n+1}\nabla\rho_\phi^{n+1},\nabla\rho_{p^i}^{n+1}\big)
       -\big(p^{i,n}\nabla\rho_\phi^{n},\nabla\rho_{p^i}^{n}\big)
       +\tau^2+H^{2r+2}+\|\nabla\rho_{p^i}^{n}\|^2\big].
   \label{TG-h12}
 \end{align}
 Multiplying the time step size $\tau$ on both side of \eqref{TG-h12},
 and applying a summation of time step $n$ from $0$ to $J$,
 where $0\leq J\leq N-1$, by using \eqref{TG-full-F01}, we get
 \begin{align*} \nonumber
  \|\nabla\rho_{p^i}^{J+1}\|^2
  & +\tau\sum\limits_{n=0}^J\|D_\tau\rho_{p^i}^{n+1}\|^2
    \le C\Big(\sum\limits_{n=0}^J \big(\tau^2+H^{2r+2}+\|\nabla\rho_{p^i}^n\|^2
      + \epsilon\|D_\tau\rho_{p^i}^{n+1}\|^2 \big) \\  \nonumber
  & +\big((\rho_{p^i}^{J+1}+\theta_{p^i}^{J+1})\nabla\Phi_{h*}^{J+1},
            \nabla\rho_{p^i}^{J+1}\big)
    +\big(p^{i,J+1}\nabla\rho_\phi^{J+1},\nabla\rho_{p^i}^{J+1}\big) \Big)
        \\ \nonumber
  &\le C\sum\limits_{n=0}^J \big(\tau^2+H^{2r+2}+\|\nabla\rho_{p^i}^n\|^2
      + \epsilon\|D_\tau\rho_{p^i}^{n+1}\|^2 \big)
      +C(\tau^2+H^{2r+2}+\|\nabla\rho_{p^i}^{J+1}\|^2)  \\
  &\le C\big(\tau^2+H^{2r+2}+\sum\limits_{n=0}^J\|\nabla\rho_{p^i}^{n+1}\|^2\big)
       +\epsilon\|D_\tau\rho_{p^i}^{n+1}\|^2,
 \end{align*}
  where we have used $\rho_{p^i}^0 =0$ by the initial condition $p_{h*}^{i,0} = R_hp^{i,0}$.

 Applying the discrete Gronwall's inequality, it easily yields
 \begin{eqnarray}  \label{tp-F2-g9}
  \|\nabla\rho_{p^i}^{J+1}\| \le C(\tau+H^{r+1}).
 \end{eqnarray}
  Thus, by \eqref{tp-F2-g9} and the projection estimate \eqref{projec-estimate1}, for any $n=0,1,\cdots,N-1$, we can easily get
  \begin{eqnarray} \label{th-JN}
   \|P_{h*}^{i,n+1}-p^{i,n+1}\|_1 \le C(\tau+h^r+H^{r+1}).
  \end{eqnarray}
 Then the desired result \eqref{TG-H1-error} is
 completed by \eqref{TG-H1-error0722} and \eqref{th-JN}.
\end{proof}

\begin{remark}
  Theorem \ref{theo-F1} show that if we choose the mesh size
  $h^r=\mathcal{O}(H^{r+1})$ for $r$-th finite element, then the two-grid method can reach the same convergence order as the standard finite element method for both the electrostatic potential and concentration in $H^1$ norm. For example, if we choose the linear finite element to discrete PNP equation, i.e. $r =1$ in this case, then our two-grid method can achieve the same convergence rate when $H = O(h^{\frac 1 2})$.
 %
%  In addition, Lemma \ref{lemma-tph-p} show that, to achieve a optimal error estimate in $L^2$ norm for Algorithm \ref{al-FI}, we may require that the mesh size $H$ satisfies $H=h$ on the coarse space, however, this is not necessary in the practical computing. Numerical results illustrate that our two-grid method also achieve the optimal error accuracy in $L^2$ norm when the varying mesh size satisfy $h^r=\mathcal{O}(H^{r+1})$, see Table \ref{ch4ex1:TG_Sem_L2_t=0.5}.

\end{remark}

 In the following we give another two-grid algorithm which is called the full decoupled scheme. Since the arguments of the error analysis are similar to the semi-decoupled Algorithm \ref{theo-F1}, the detailed analysis shall not be presented here again.

 \begin{algo}\label{aII-FII}(Full decoupled scheme)\\
 Step 1. Given $(P_H^{i,n},\Phi_H^n)\in V_H^r, \ i=1,2$, find $(P_H^{i,n+1}, \Phi_H^{n+1})\in V_H^r$, such that
 \begin{eqnarray} \label{aII-F11}
 (D_\tau P_H^{i,n+1},v_H)+(\nabla P_H^{i,n+1},\nabla v_H)
 +(q^iP_H^{i,n+1}\nabla\Phi_H^{n+1},\nabla v_H)
 = (F_i^{n+1},v_H),\quad \forall v_H\in V_H^r, \\ \label{aII-F12}
 (\nabla\Phi_H^{n+1},\nabla w_H)-\sum\limits_{i = 1}^2 q^i(P_H^{i,n+1},w_H)
 = (F_3^{n+1},w_H),\quad \forall w_H\in V_H^r.
 \end{eqnarray}
 Step 2. Given $(\Phi_H^{n+1}, P_H^{i,n+1}) \in V_H^r$ and $(P_{h*}^{i,n},\Phi_{h*}^n)\in V_h^r$, find $(P_{h*}^{i,n+1},\Phi_{h*}^{n+1})\in V_h^r$, such that
 \begin{eqnarray} \label{aII-F21}
 (D_\tau P_{h*}^{i,n+1},v_h)+(\nabla P_{h*}^{i,n+1},\nabla v_h)
 +(q^iP_{h*}^{i,n+1}\nabla\Phi_H^{n+1},\nabla v_h)
 = (F_i^{n+1},v_h),\quad \forall v_h\in V_h^r, \\ \label{aII-F22}
 (\nabla\Phi_{h*}^{n+1},\nabla w_h)-\sum\limits_{i = 1}^2 q^iP_H^{i,n+1},w_h)
 = (F_3^{n+1},w_h),\quad \forall w_h\in V_h^r,
 \end{eqnarray}
 where
 $$D_\tau P_H^{i,n+1}=\frac{P_H^{i,n+1}-P_H^{i,n}} { \tau} , ~\mbox{for}~ n=0,1,2,\cdots,N-1,$$
 and
 $$D_\tau P_{h*}^{i,n+1}=\frac{P_{h*}^{i,n+1}-P_{h*}^{i,n}}{\tau}, ~\mbox{for}~ n=0,1,2,\cdots,N-1.$$
 \end{algo}

 Compared with Algorithm \ref{al-FI}, the finite element approximation $\Phi_H^{n+1}$ on the coarse grid is also used to decouple the system on the fine grid in Algorithm \ref{aII-FII}. Since the system \eqref{aII-F21}-\eqref{aII-F22} is fully decoupled, it can be solved in parallel on the fine grid level.

  Similar to Theorem \ref{theo-F1}, we have the following result.

\begin{theo} \label{theo-ful1decoupled}
 Let $(p^{i,{n+1}},\phi^{n+1})$ be the solution of \eqref{pnpw1}-\eqref{pnpw2}. Assume $(P_{h*}^{i,n+1},\Phi_{h*}^{n+1})$ is the solution obtained by Algorithm \ref{aII-FII}. Then for any $n=0,1,\cdots,N-1$, we have the following estimate
 \begin{eqnarray} \label{TG-fulldecoupled-H1-error}
  \|\Phi_{h*}^{n+1}-\phi^{n+1}\|_1 + \|P_{h*}^{i,n+1}-p^{i,n+1}\|_1
  \le C (\tau+ h^{r}+H^{r+1}).
 \end{eqnarray}
\end{theo}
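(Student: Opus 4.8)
The plan is to follow the proofs of Lemma~\ref{lemma-tph-p} and Theorem~\ref{theo-F1} almost line for line, since Step~1 of Algorithm~\ref{aII-FII} is identical to Step~1 of Algorithm~\ref{al-FI}, and the two Step~2's differ only in that the drift term of \eqref{aII-F21} carries the coarse potential $\Phi_H^{n+1}$ in place of the fine potential $\Phi_{h*}^{n+1}$. I would decompose $P_{h*}^{i,n+1}-p^{i,n+1}=\rho_{p^i}^{n+1}+\theta_{p^i}^{n+1}$ and $\Phi_{h*}^{n+1}-\phi^{n+1}=\rho_\phi^{n+1}+\theta_\phi^{n+1}$ with $\theta$ the Ritz errors \eqref{Rhprojetion-1}--\eqref{Rhprojetion-2}. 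Since \eqref{aII-F22} coincides with \eqref{aIF22}, subtracting \eqref{fwpro-2} and testing with $w_h=\rho_\phi^{n+1}$ gives, exactly as in \eqref{TG-ff1}, $\|\nabla\rho_\phi^{n+1}\|\le C\sum_i\|P_H^{i,n+1}-R_hp^{i,n+1}\|\le C(\tau+H^{r+1})$ by Theorem~\ref{theo-full-max} on the coarse mesh and \eqref{projec-estimate1}; combined with \eqref{projec-estimate2} this already yields the potential part $\|\Phi_{h*}^{n+1}-\phi^{n+1}\|_1\le C(\tau+h^r+H^{r+1})$, as in \eqref{TG-H1-error0722}.

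For the concentration I would first reprove Lemma~\ref{lemma-tph-p} in this setting: subtract \eqref{fwpro-1} from \eqref{aII-F21}, test with $v_h=\rho_{p^i}^{n+1}$, and arrive at $(D_\tau\rho_{p^i}^{n+1},\rho_{p^i}^{n+1})+\|\nabla\rho_{p^i}^{n+1}\|^2=T_1^n+T_2^n+T_3^n$, where $T_1^n,T_2^n$ are the projection and truncation terms bounded as in \eqref{tg711}--\eqref{tg712} by $C(\tau^2+h^{2r+2}+\|\rho_{p^i}^{n+1}\|^2)$, and $T_3^n=-q^i\big(P_{h*}^{i,n+1}\nabla\Phi_H^{n+1}-p^{i,n+1}\nabla R_h\phi^{n+1},\nabla\rho_{p^i}^{n+1}\big)$. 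Writing $\nabla\Phi_H^{n+1}=\nabla\Phi_{h*}^{n+1}+\nabla(\Phi_H^{n+1}-\Phi_{h*}^{n+1})$ splits $T_3^n$ into the term already controlled in the proof of Lemma~\ref{lemma-tph-p} for Algorithm~\ref{al-FI} plus the new term $-q^i\big(P_{h*}^{i,n+1}\nabla(\Phi_H^{n+1}-\Phi_{h*}^{n+1}),\nabla\rho_{p^i}^{n+1}\big)$; keeping $\|\nabla\Phi_H^{n+1}\|_{0,\infty}\le C$ via the argument of \eqref{full-phi-infty}, a mathematical induction on $n$ as in \eqref{full-hypothesis}, discrete Gronwall, and \eqref{projec-estimate1} close the $L^2$ bound $\|P_{h*}^{i,n+1}-p^{i,n+1}\|\le C(\tau+H^{r+1})$.

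With this in hand, the $H^1$ bound for $\rho_{p^i}$ follows exactly as in Theorem~\ref{theo-F1}: test the concentration error equation with $v_h=D_\tau\rho_{p^i}^{n+1}$, use $(\nabla\rho_{p^i}^{n+1},D_\tau\nabla\rho_{p^i}^{n+1})\ge\tfrac12 D_\tau\|\nabla\rho_{p^i}^{n+1}\|^2$, bound $\hat H_1^n,\hat H_2^n$ by Young's inequality, and rewrite the drift term $\hat H_3^n$ by the telescoping identity $\tfrac1\tau\big(a^{n+1}\nabla(\rho_{p^i}^{n+1}-\rho_{p^i}^{n}),\nabla\rho_{p^i}^{n}\big)=\tfrac1\tau\big[(a^{n+1}\nabla\rho_{p^i}^{n+1},\nabla\rho_{p^i}^{n+1})-(a^{n}\nabla\rho_{p^i}^{n},\nabla\rho_{p^i}^{n})\big]+(\text{lower order})$ applied to each coefficient $a$ occurring in $\hat H_3^n$, now including the new one involving $\Phi_H^{n+1}-\Phi_{h*}^{n+1}$. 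Multiplying by $\tau$ and summing, the telescoped pieces collapse to an endpoint contribution that Young's inequality absorbs into $\epsilon\|\nabla\rho_{p^i}^{J+1}\|^2+C_\epsilon(\tau^2+H^{2r+2})$; discrete Gronwall then gives $\|\nabla\rho_{p^i}^{J+1}\|\le C(\tau+H^{r+1})$, and with \eqref{projec-estimate1} and the triangle inequality, together with the potential estimate, we obtain \eqref{TG-fulldecoupled-H1-error}.

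The main obstacle is the new term $-q^i\big(P_{h*}^{i,n+1}\nabla(\Phi_H^{n+1}-\Phi_{h*}^{n+1}),\nabla\rho_{p^i}^{n+1}\big)$ and its telescoped version, which have no counterpart in the analysis of Algorithm~\ref{al-FI}. The key is that $\Phi_H^{n+1}$ and $\Phi_{h*}^{n+1}$ are the $V_H^r$- and $V_h^r$-Galerkin approximations of the \emph{same} Poisson problem $-\Delta\psi=\sum_iq^iP_H^{i,n+1}+F_3^{n+1}$ (compare \eqref{aII-F12} with \eqref{aII-F22}), so that $(\nabla(\Phi_{h*}^{n+1}-\Phi_H^{n+1}),\nabla w_H)=0$ for all $w_H\in V_H^r$; the finite element $W^{1,\infty}$-estimate for this problem, using Lemma~\ref{poisson-regEstimate} and the stability of $P_H^{i,n+1}$, keeps $\|\nabla\Phi_H^{n+1}\|_{0,\infty}\le C$, while Galerkin orthogonality and an Aubin--Nitsche duality step — legitimate since $\Phi_H^{n+1}-\Phi_{h*}^{n+1}$ vanishes on $\partial\Omega$ — combined with the coarse-mesh $L^2$ estimate $\|P_H^{i,n+1}-p^{i,n+1}\|\le C(\tau+H^{r+1})$ of Theorem~\ref{theo-full-max} and the projection bounds \eqref{projec-estimate1}--\eqref{projec-estimate2}, give a bound on $\Phi_H^{n+1}-\Phi_{h*}^{n+1}$ sharp enough to absorb this term. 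Checking that its contribution does not degrade the $H^{r+1}$ rate is the delicate point, and is precisely why the fully decoupled scheme needs its own — structurally parallel — argument rather than a verbatim quotation of Theorem~\ref{theo-F1}.
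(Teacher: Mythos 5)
The paper offers no actual proof of Theorem \ref{theo-ful1decoupled}; it only remarks that the arguments are ``similar to the semi-decoupled Algorithm \ref{al-FI}'' and omits the details. Your proposal therefore attempts more than the paper does, and its architecture --- the $\rho+\theta$ decomposition, the unchanged treatment of \eqref{aII-F22} giving $\|\nabla\rho_\phi^{n+1}\|\le C(\tau+H^{r+1})$, the $L^2$ estimate first and then the $H^1$ estimate with $v_h=D_\tau\rho_{p^i}^{n+1}$ and telescoping --- is exactly that of Lemma \ref{lemma-tph-p} and Theorem \ref{theo-F1} and is sound as far as it goes. You also correctly isolate the single place where the two algorithms differ: the drift term now carries $\nabla\Phi_H^{n+1}$ instead of $\nabla\Phi_{h*}^{n+1}$.

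That is precisely where there is a genuine gap. The new term is $-q^i\bigl(P_{h*}^{i,n+1}\nabla(\Phi_H^{n+1}-\Phi_{h*}^{n+1}),\nabla\rho_{p^i}^{n+1}\bigr)$, and the bounds you invoke control the wrong norm of $\Phi_H^{n+1}-\Phi_{h*}^{n+1}$. Since these are the $V_H^r$- and $V_h^r$-Galerkin projections of the same $\psi$ with $-\Delta\psi=\sum_iq^iP_H^{i,n+1}+F_3^{n+1}$, C\'ea's lemma gives only $\|\nabla(\Phi_H^{n+1}-\Phi_{h*}^{n+1})\|\le C(H^r+h^r)\|\psi\|_{r+1}=O(H^r)$, and this is sharp for generic data; Cauchy--Schwarz and Young then leave a per-step contribution $CH^{2r}+\epsilon\|\nabla\rho_{p^i}^{n+1}\|^2$, so Gronwall yields $\|\nabla\rho_{p^i}^{n+1}\|=O(\tau+H^{r})$ rather than $O(\tau+H^{r+1})$ --- for $r=1$, $H=h^{1/2}$ that is $O(h^{1/2})$ instead of the claimed $O(h)$. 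The Aubin--Nitsche step you propose does give $\|\Phi_H^{n+1}-\Phi_{h*}^{n+1}\|=O(H^{r+1})$, but the term involves the \emph{gradient} of this difference; to use the $L^2$ bound one must move a derivative onto $P_{h*}^{i,n+1}\nabla\rho_{p^i}^{n+1}$, which is only piecewise polynomial, and the resulting element-interface jumps (or the inverse inequality $\|\Delta\rho_{p^i}^{n+1}\|\le Ch^{-1}\|\nabla\rho_{p^i}^{n+1}\|$) cost a factor $h^{-1}$ that cancels the gained power of $H$ under the intended scaling $H^{r+1}\sim h^r$. The same obstruction reappears if one instead compares $\nabla\Phi_H^{n+1}$ with $\nabla R_h\phi^{n+1}$, since $\|\nabla(R_H\phi^{n+1}-\phi^{n+1})\|=O(H^r)$. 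The structural point is that in Algorithm \ref{al-FI} the coarse data enters the fine-grid problem only through a zeroth-order ($L^2$) coupling, which is why $H^{r+1}$ survives, whereas in Algorithm \ref{aII-FII} it enters through a first-order term. So the ``delicate point'' you flag is not a routine verification: as written your argument proves only $\|P_{h*}^{i,n+1}-p^{i,n+1}\|_1\le C(\tau+h^r+H^r)$, and a genuinely new ingredient (e.g., a negative-norm estimate for the whole drift term) is needed to reach \eqref{TG-fulldecoupled-H1-error}.
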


\begin{remark}
  Theorem \ref{theo-ful1decoupled} shows that the optimal convergence rate for both the electrostatic potential and concentration in $H^1$ norm could be reached when $h^r=\mathcal{O}(H^{r+1})$, which indicates our two-grid method retains the same order of accuracy as the standard finite element method under the assumption $H \le C h^{\frac{r}{r+1}}$.
  %For example, if we choose the linear finite element to discrete PNP equation \eqref{pnpt1}, i.e. $r = 1$, then our two-grid method can achieve the same convergence rate  as the standard finite element method when $H = \mathcal{O}(h^{\frac{1}{2}})$.
  Moreover, since Algorithm \ref{aII-FII} is full decoupled in step 2, it can be solved in parallel on the fine grid level at each time step, the efficiency of which could be much better than the standard finite element method.
\end{remark}

\setcounter{equation}{0}
\section{Numerical Experiments} \label{sec-num-res}

\noindent
\par We now present numerical experiments to demonstrate the effectiveness and efficiency of the two-grid approach. To implement the algorithms, for the first example, the code is written in Fortran 90 and all the computations are carried out on the computer with Dual core 96 GB RAM HPZ280. The second one is carried out by Matlab R2012a on a microcomputer and the programme is under the frame work of iFEM toolbox (https://bitbucket.org/ifem/ifem).

%%%%%% 一致网格正方形示例图 所在位置，需要时候 更新%%%%%%%

\begin{figure}[H]
 \centerline{
 \includegraphics[height=5.0cm, width=5.0cm]{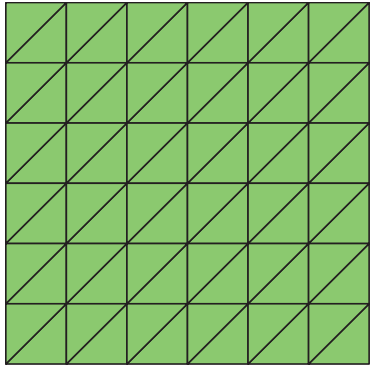} }
 \caption{ A uniform triangulation on the unit square with $M = 6$.}
 \label{uniform_mesh}
\end{figure}

\noindent
\emph{Example 5.1}
 Let the computational domain be the unit square $\Omega = [0,1]\times [0,1]$, and a uniform triangular partition with $M +1$ nodes in each direction is used. An illustration with $M = 6$ is shown in Fig. \ref{uniform_mesh}. For the coarse grid space and the fine grid space,
 the domain $\Omega$ is uniformly divided by the triangulation of mesh size $H$ and $h$, respectively.

 We choose $q^1=1, q^2=-1$ and consider the following PNP equations (cf. \cite{Y.Z. Sun 16})
 \begin{eqnarray}  \label{pnp_num}
 \left\{ \begin{array} {rcl}
   {\frac{\partial p^1}{\partial t} } - \nabla\cdot(\nabla {p^1} + {p^1}\nabla \phi ) = {F_1},   \\
   \\
   {\frac{\partial p^2}{\partial t} } - \nabla\cdot(\nabla {p^2} - {p^2}\nabla \phi ) = {F_2},   \\
   \\
   -\Delta \phi  - (p^1-p^2) = {F_3}.
  \end{array} \right.
 \end{eqnarray}
 The initial-boundary condition and the right-hand side functions $F_i,~i=1,2,3$ are chosen such that the exact solutions of \eqref{pnp_num} are given by
 \begin{eqnarray*}
 \left\{ \begin{array} {rcl}
   p^1(t,x,y) &=& \sin(t)\sin(2\pi x)\sin(2\pi y),  \\
   p^2(t,x,y) &=& \sin(t)\sin(3\pi x)\sin(3\pi y),   \\
   \phi(t,x,y) &=& (1-e^{-t})\sin(\pi x)\sin(\pi y).
  \end{array} \right.
 \end{eqnarray*}

 In the following, we first present the numerical results of standard finite element method \eqref{fph1}-\eqref{fph2},  and then show the results of Algorithm \ref{al-FI} and \ref{aII-FII}.

 To solve the nonlinear coupled system \eqref{fph1}-\eqref{fph2}, we use the following algorithm  which is introduced in \cite{Y.Z. Sun 16, A. Prohl 09} to get the finite element solution.

 \begin{algo} \label{ALGO_num}
 Step 1. Initialization for the time marching: Set time step $n=0$, and get the initial value $(P_{h}^{1,0},P_{h}^{2,0},\Phi_h^0) \in [V_h^r]^3$.
 \\
 Step 2. Initialization for nonlinear iteration: Let $(P_{h}^{1,n+1,0},P_{h}^{2,n+1,0},\Phi_h^{n+1,0}) = (P_{h}^{1,n},P_{h}^{2,n},\Phi_h^{n}) $ when $n \ge 0$ and $l = 0$.
 \\
 Step 3. Finite element computation on each time level: For $l \ge 0$, compute \\ $(P_{h}^{1,n+1,l+1},P_{h}^{2,n+1,l+1},\Phi_h^{n+1,l+1}) \in [V_h^r]^3$, such that for all $(v_{1h},v_{2h},w_h) \in [V_h^r]^3$,
 \begin{align*}
  \frac{1}{\tau}(P_{h}^{1,n+1,l+1}, v_{1h})+(\nabla P_{h}^{1,n+1,l+1}, \nabla v_{1h}) + (P_{h}^{1,n+1,l+1}\nabla\Phi_h^{n+1,l}, \nabla v_{1h} )
  &= (F_1^{n+1}, v_{1h}) + \frac{1}{\tau}(P_{h}^{1,n}, v_{1h}), \\
  \\
  \frac{1}{\tau}(P_{h}^{2,n+1,l+1}, v_{2h})+(\nabla P_{h}^{2,n+1,l+1}, \nabla v_{1h}) + (P_{h}^{2,n+1,l+1}\nabla\Phi_h^{n+1,l}, \nabla v_{2h} )
  &= (F_2^{n+1}, v_{2h}) + \frac{1}{\tau}(P_{h}^{2,n}, v_{2h}), \\
  \\
  (\nabla \Phi_h^{n+1,l+1},\nabla w_h)-( P_{h}^{1,n+1,l+1}-P_{h}^{2,n+1,l+1},w_h)
  &= (F_3^{n+1}, w_h).
 \end{align*}
 Step 4. Checking the stopping criteria for nonlinear iteration: For a given  tolerance $\epsilon$, stop the iteration when
  \begin{eqnarray*}
  \|P_{h}^{1,n+1,l+1}-P_{h}^{1,n+1,l}\|+\|P_{h}^{2,n+1,l+1}-P_{h}^{2,n+1,l}\| +\|\Phi_h^{n+1,l+1}-\Phi_h^{n+1,l}\| \le \epsilon,
  \end{eqnarray*}
 and set $(P_{h}^{1,n+1}, P_{h}^{2,n+1}, \Phi_h^{n+1})
 =(P_{h}^{1,n+1,l+1},P_{h}^{2,n+1,l+1}, \Phi_h^{n+1,l+1})$. Otherwise, set $l\leftarrow l+1$ and go to Step 3 to continue the nonlinear iteration.
 \\
 Step 5. Time marching: Stop if $n+1 = N$. Otherwise, set $n \leftarrow n+1$, and go to Step 2.
 \end{algo}

 In our computation, the piecewise linear finite elements on a uniform triangular mesh are used to discretize the PNP equations. The Gummel iteration \eqref{gummel} is used during the finite element computation on each time level in Step 3. We choose the time step $\tau = h^2$ and set the final time $T=0.5$. The tolerance $\epsilon =1.0 \times 10^{-6}$  is chosen for the nonlinear iteration in Algorithm \ref{ALGO_num}. Particularly, we adopt the AMG-PCG and AMG-PGMRES solver to solve the algebraic system ``$Ax=b$" for the Poisson equation and Nernst-Planck equations, respectively, and the inneriteration stopped if the Euclidean norm of the residual vector is less than $10^{-8}$. The numerical results in Table \ref{ch4ex1:stdFEM_jishuL2_t=0.5} and Table \ref{ch4ex1:stdFEM_t=0.5} show that the errors for $\Phi_h$ and $P_h^i \ (i=1,2)$ in $L^2$ norm and $H^1$ norm are second-order and first-order reduction, respectively, which coincides with the convergence theory shown in Theorem \ref{theo-full-max} and \eqref{full-H1-estimat}.
\begin{table}[H]
\centering
\caption{ $L^2$ error of the standard finite element method  }
\begin{tabular}{p{1.52cm}p{2.5cm}p{1.52cm}p{2.5cm}p{1.52cm}p{2.5cm}p{.8cm}}
 \toprule %[1pt]
 %\hline
 $h$ &$\|\Phi_h-\phi\|$ &Order& $\|P_h^{1}-p^1\|$ & Order & $\|P_h^{2}-p^2\|$ & Order  \\
 \midrule %[0.5pt]
 %\hline
 1/9  & 7.3983E-03 & $-$  & 3.2614E-02 & $-$  & 1.2117E-01 & $-$   \\
 1/16 & 2.4124E-03 & 1.95 & 1.0904E-02 & 1.90 & 4.2949E-02 & 1.80   \\
 1/25 & 9.9267E-04 & 1.99 & 4.5135E-03 & 1.98 & 1.8098E-02 & 1.94   \\
 1/36 & 4.8039E-04 & 1.99 & 2.1894E-03 & 1.98 & 8.8305E-03 & 1.97    \\
 1/49 & 2.5946E-04 & 2.00 & 1.1835E-03 & 2.00 & 4.7870E-03 & 1.99   \\
 1/64 & 1.5221E-04 & 2.00 & 6.9466E-04 & 2.00 & 2.8131E-03 & 1.99    \\
 \bottomrule %[1pt]
 %\hline
\end{tabular}  \label{ch4ex1:stdFEM_jishuL2_t=0.5}
\end{table}

\begin{table}[H]
\centering
\caption{ $H^1$ error of the standard finite element method  }
\begin{tabular}{p{1.52cm}p{2.5cm}p{1.52cm}p{2.5cm}p{1.52cm}p{2.5cm}p{.8cm}}
 \toprule %[1pt]
 $h$ &$\|\Phi_h-\phi\|_1$ &Order& $\|P_h^{1}-p^1\|_1$ & Order & $\|P_h^{2}-p^2\|_1$ & Order  \\
 \midrule %[0.5pt]
 1/9  & 1.5014E-01 & $-$  & 7.1128E-01 & $-$  & 2.6894E+00 & $-$   \\
 1/16 & 8.5653E-02 & 0.98 & 4.1627E-01 & 0.93 & 1.6096E+00 & 0.89  \\
 1/25 & 5.4812E-02 & 1.00 & 2.7032E-01 & 0.97 & 1.0454E+00 & 0.97  \\
 1/36 & 3.8128E-02 & 1.00 & 1.9159E-01 & 0.94 & 7.3134E-01 & 0.98  \\
 1/49 & 2.8011E-02 & 1.00 & 1.4457E-01 & 0.91 & 5.3916E-01 & 0.99   \\
 1/64 & 2.1458E-02 & 1.00 & 1.1497E-01 & 0.86 & 4.1421E-01 & 0.99   \\
 \bottomrule %[1pt]
\end{tabular}  \label{ch4ex1:stdFEM_t=0.5}
\end{table}

 The exact solution and the two-grid solution in Algorithm \ref{al-FI} when $h=1/64, t=0.5$ are shown in Fig. \ref{exact_twogrid_phi_pn2H}, \ref{exact_twogrid_p1_pn2H} and \ref{exact_twogrid_p2_pn2H}.
 Compared the exact solution (a) and the two-grid solution (b), we can easily find that the two-grid finite element solution and the exact one are similar, which indicates the validity of the numerical test.

%%%%%% 三图 所在位置，需要时候 更新%%%%%%
\begin{figure}[H]
 \centerline{
 \includegraphics[height=8.5cm, width=17.0cm]{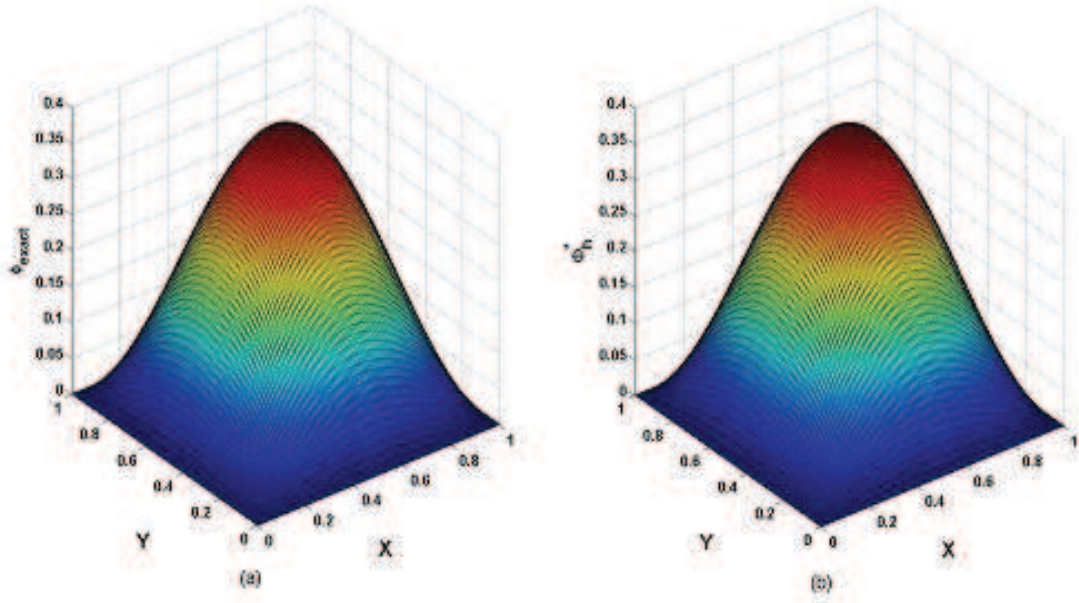} }
 \caption{ The exact solution (a) and two-grid solution (b) of $\phi$: $h=1/64, \tau = h^2, t=0.5$. }
 \label{exact_twogrid_phi_pn2H}
\end{figure}

\begin{figure}[H]
 \centerline{
 \includegraphics[height=8.5cm, width=17.0cm]{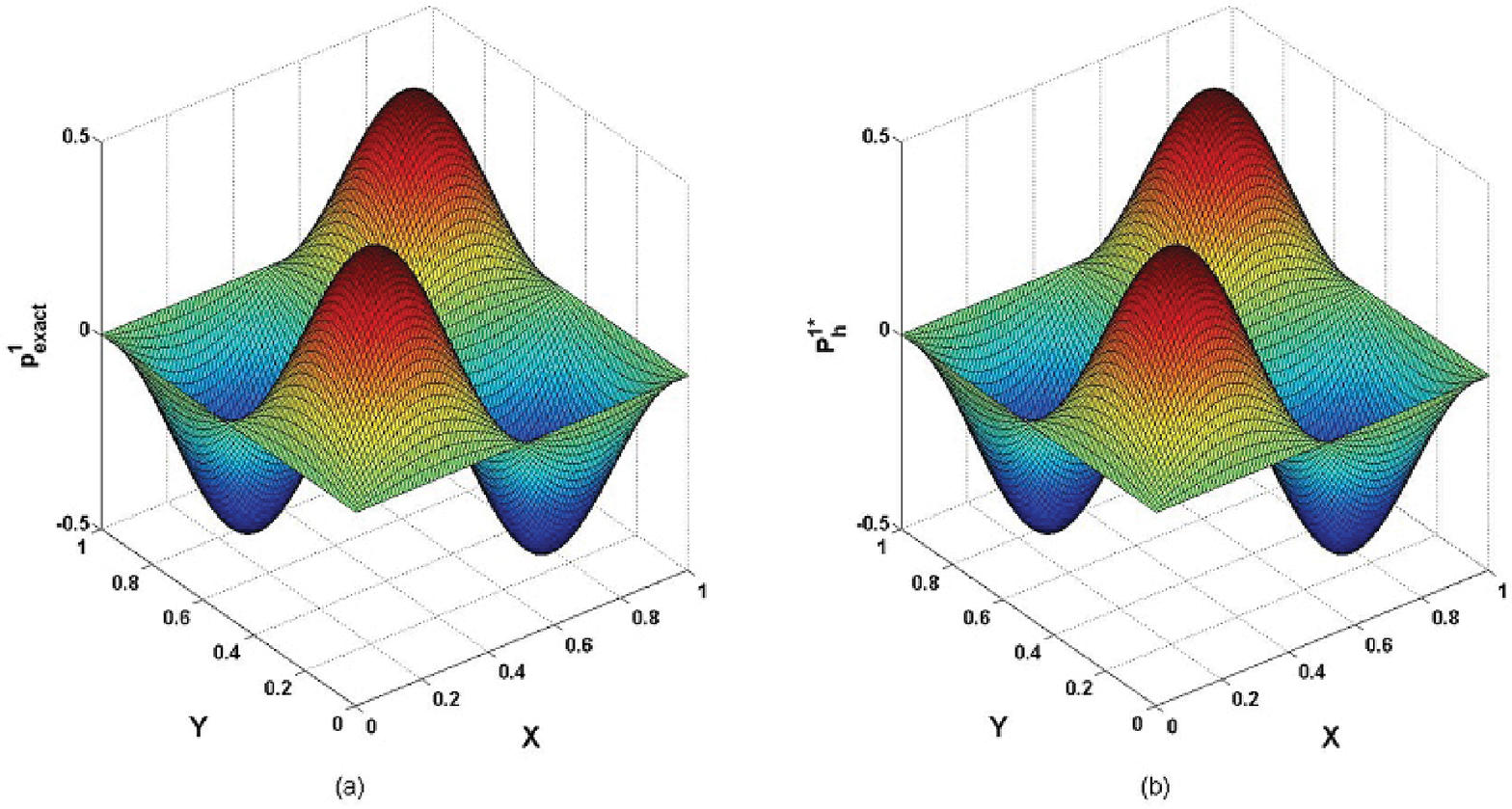} }
 \caption{ The exact solution (a) and two-grid solution (b) of $p^1$: $h=1/64, \tau = h^2, t=0.5$. }
 \label{exact_twogrid_p1_pn2H}
\end{figure}

 \begin{figure}[H]
 \centerline{
 \includegraphics[height=8.5cm, width=17.0cm]{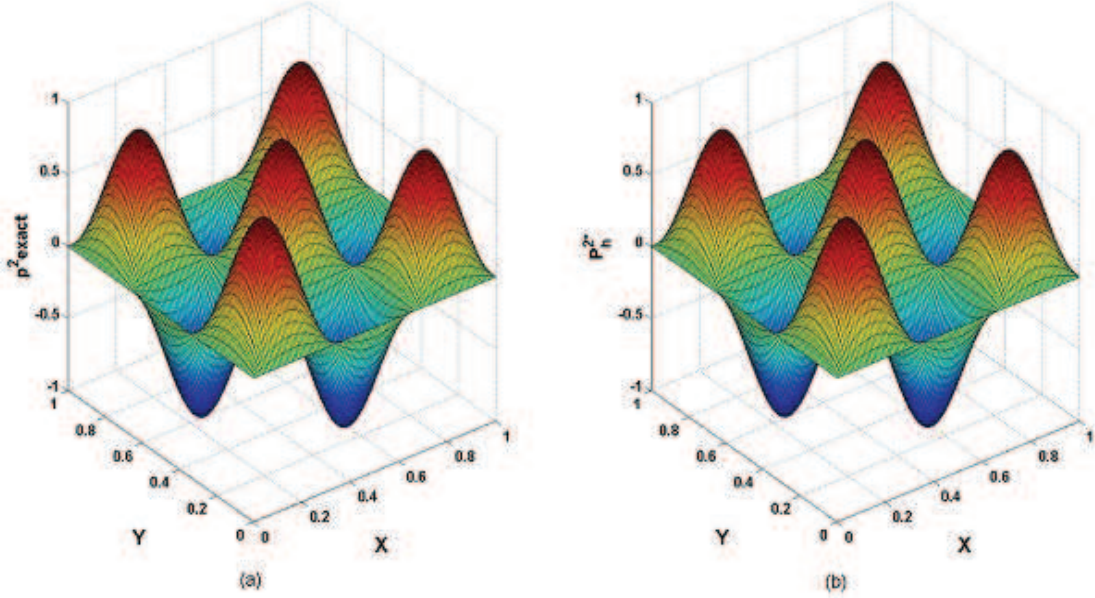} }
 \caption{ The exact solution (a) and two-grid solution (b) of $p^2$: $h=1/64, \tau = h^2, t=0.5$. }
 \label{exact_twogrid_p2_pn2H}
\end{figure}

 Table \ref{ch4ex1:TG_Sem_t=0.5} and Table \ref{ch4ex1:TG_Sem_L2_t=0.5} show the
 errors between the exact solution and the two-grid solution of Algorithm \ref{al-FI} with varying mesh size $H = \sqrt{h}$, where the order represents the convergence order relating to the fine grid size $h$ in $H^1$ or $L^2$ norm. The errors indicate that the numerical results coincide with the theoretical result in Theorem \ref{theo-F1} when $r=1$. For Algorithm \ref{aII-FII}, the errors between the exact solution and the two-grid solution with varying mesh size $H = \sqrt{h}$ is shown in Table \ref{ch4ex1:TG_Full_t=0.5}, where the order denotes the convergence order relating to the fine grid size $h$ in $H^1$ norm.

 \begin{table}[H]
 \centering
 \caption{$H^1$ error of the two-grid solutions of Algorithm \ref{al-FI} }
 \begin{tabular}{p{.8cm}p{1.6cm}p{2.1cm}p{1.5cm}p{2.1cm}p{1.5cm}p{2.1cm}p{.8cm}}
 \toprule %[1pt]
  $H$ & $h=H^2$ & $\|\Phi_h^*-\phi\|_1$ & Order & $\|P_{h*}^{1}-p^1\|_1$
  & Order & $\|P_{h*}^{2}-p^2\|_1$ & Order  \\
  \midrule %[0.5pt]
  1/3 & 1/9  & 1.5014E-01 & $-$  & 7.0997E-01 & $-$  & 2.6896E+00 & $-$ \\
  1/4 & 1/16 & 8.5654E-02 & 0.98 & 4.1360E-01 & 0.94 & 1.6093E+00 & 0.89 \\
  1/5 & 1/25 & 5.4812E-02 & 1.00 & 2.6605E-01 & 0.99 & 1.0447E+00 & 0.97 \\
  1/6 & 1/36 & 3.8125E-02 & 1.00 & 1.8547E-01 & 0.99 & 7.3026E-01 & 0.98 \\
  1/7 & 1/49 & 2.8006E-02 & 1.00 & 1.3635E-01 & 1.00 & 5.3766E-01 & 0.99 \\
  1/8 & 1/64 & 2.1450E-02 & 1.00 & 1.0448E-01 & 1.00 & 4.1223E-01 & 0.99 \\
  \bottomrule %[1pt]
 \end{tabular} \label{ch4ex1:TG_Sem_t=0.5}
\end{table}

\begin{table}[H]
 \centering
 \caption{$L^2$ error of the two-grid solutions of Algorithm \ref{al-FI} }
 \begin{tabular}{p{.8cm}p{1.6cm}p{2.1cm}p{1.5cm}p{2.1cm}p{1.5cm}p{2.1cm}p{.8cm}}
 \toprule %[1pt]
  $H$ & $h=H^2$ & $\|\Phi_h^*-\phi\|$ & Order & $\|P_{h*}^{1}-p^1\|$
  & Order & $\|P_{h*}^{2}-p^2\|$ & Order  \\
  \midrule %[0.5pt]
  1/3 & 1/9  & 7.4133E-03 & $-$  & 3.2547E-02 & $-$  & 1.2124E-01 & $-$ \\
  1/4 & 1/16 & 2.4206E-03 & 1.95 & 1.0885E-02 & 1.90 & 4.2978E-02 & 1.80 \\
  1/5 & 1/25 & 9.9652E-04 & 1.99 & 4.5062E-03 & 1.98 & 1.8111E-02 & 1.94 \\
  1/6 & 1/36 & 4.8235E-04 & 1.99 & 2.1860E-03 & 1.98 & 8.8367E-03 & 1.97 \\
  1/7 & 1/49 & 2.6054E-04 & 2.00 & 1.1817E-03 & 2.00 & 4.7904E-03 & 1.99 \\
  1/8 & 1/64 & 1.5285E-04 & 2.00 & 6.9361E-04 & 2.00 & 2.8151E-03 & 1.99 \\
  \bottomrule %[1pt]
 \end{tabular} \label{ch4ex1:TG_Sem_L2_t=0.5}
\end{table}

 Compared Table \ref{ch4ex1:TG_Sem_t=0.5} and Table \ref{ch4ex1:TG_Sem_L2_t=0.5} with Table \ref{ch4ex1:stdFEM_t=0.5} and Table \ref{ch4ex1:stdFEM_jishuL2_t=0.5}, respectively,  we can find that when $H = \mathcal{O}(h^{1/2})$, the errors in $H^1$ norm and $L^2$ norm approximate the first-order and the second-order, respectively, which indicates the solution of Algorithm \ref{al-FI} remains the same convergence order as the standard finite element method. Similarly, by comparing the results in
 Table \ref{ch4ex1:TG_Full_t=0.5} with that in
 Table \ref{ch4ex1:stdFEM_t=0.5}, the errors show that the full decoupled two-grid Algorithm \ref{aII-FII} can also achieve the same order of accuracy as the standard finite element method.
\begin{table}[H]
 \centering
 \caption{ $H^1$ error of the two-grid solutions of Algorithm \ref{aII-FII} }
 \begin{tabular}{p{.8cm}p{1.6cm}p{2.1cm}p{1.5cm}p{2.1cm}p{1.5cm}p{2.1cm}p{.8cm}}
 \toprule %[1pt]
  $H$ & $h=H^2$ & $\|\Phi_h^*-\phi\|_1$ & Order & $\|P_{h*}^{1}-p^1\|_1$
  & Order & $\|P_{h*}^{2}-p^2\|_1$ & Order  \\
  \midrule %[0.5pt]
 1/3 & 1/9  & 1.5014E-01 & $-$  & 7.0999E-01 & $-$  & 2.6896E+00 & $-$  \\
 1/4 & 1/16 & 8.5657E-02 & 0.98 & 4.1366E-01 & 0.94 & 1.6093E+00 & 0.89 \\
 1/5 & 1/25 & 5.4814E-02 & 1.00 & 2.6613E-01 & 0.99 & 1.0447E+00 & 0.97 \\
 1/6 & 1/36 & 3.8127E-02 & 1.00 & 1.8554E-01 & 0.99 & 7.3032E-01 & 0.98 \\
 1/7 & 1/49 & 2.8007E-02 & 1.00 & 1.3641E-01 & 1.00 & 5.3770E-01 & 0.99 \\
 1/8 & 1/64 & 2.1451E-02 & 1.00 & 1.0454E-01 & 1.00 & 4.1227E-01 & 0.99 \\
 \bottomrule %[1pt]
 \end{tabular} \label{ch4ex1:TG_Full_t=0.5}
\end{table}

 The CPU time cost of Algorithm \ref{ALGO_num} (the finite element method combined with the Gummel iteration), Algorithm \ref{al-FI} and Algorithm \ref{aII-FII} are given in Table \ref{ch4ex1:CPU_time-To-0.5}, where the letter $h$ represents the size of grid in Algorithm \ref{ALGO_num} and also the size of the fine grid in Algorithm \ref{al-FI} and \ref{aII-FII}. As shown in Table \ref{ch4ex1:CPU_time-To-0.5}, the CPU time cost by Algorithm \ref{al-FI} or \ref{aII-FII} is much less than that of Algorithm \ref{ALGO_num} as $h$ becomes small, which reveals that the two-grid method is more efficient than the finite element method combined with the Gummel iteration. Moreover, Algorithm \ref{aII-FII} could achieve a better effect for large scale problems if a parallel program is applied at each time level.
\begin{table}[H]
\centering
\caption{ The total CPU time (second) (Example 5.1)}
\begin{tabular}{p{3.2cm}p{4.5cm}p{4.5cm}p{2.cm}}
\toprule %[1pt]
  $h$ & \tabincell{l}{Algorithm \ref{ALGO_num}\\ CPU Time} &
  \tabincell{l}{Algorithm \ref{al-FI} \\CPU Time}&
  \tabincell{l}{Algorithm \ref{aII-FII}\\ CPU Time} \\
  \midrule %[0.5pt]
 1/9  & 4.35     & 1.19   & 0.62    \\
 1/16 & 37.62    & 7.83   & 6.19    \\
 1/25 & 351.76   & 29.52  & 22.89   \\
 1/36 & 3423.40  & 99.10  & 84.28   \\
 1/49 & 8954.83  & 253.18 & 225.79  \\
 1/64 & 28221.89 & 666.44 & 603.31  \\
 \bottomrule %[1pt]
\end{tabular}  \label{ch4ex1:CPU_time-To-0.5}
\end{table}

 \noindent
 \emph{Example 5.2}
 We consider the following PNP model for simulating asymmetrical conductance changes in Gramicidin A (gA) with two ion species in a $1:1$ CsCl solution with valence $+1$ and $-1$, respectively,
 \begin{eqnarray} \label{ion-pnp}
  \left\{ \begin{array}{l}
  \frac{\partial p}{\partial t} = \nabla\cdot D_p\big(\nabla p
   +\frac{e}{K_BT} p\nabla\phi\big), \quad~~~\mbox{in} ~~{\Omega_s}, \\
   \\
  \frac{\partial n}{\partial t} = \nabla\cdot D_n\big(\nabla n
   -\frac{e}{K_BT} n\nabla\phi\big), \quad~~\mbox{in} ~~{\Omega_s}, \\
   \\
  -\nabla\cdot(\varepsilon\nabla\phi) = (p-n)e, \quad~~~~~~~~~~~~\mbox{in} ~~\Omega,
 \end{array} \right.
\end{eqnarray}
 where $\Omega=\Omega_s\cup \Omega_m$, $\Omega_s$ is the solvent region, $\Omega_m$ is the solute region, $\phi$ is the electrostatic penitential,
 $p(x)$ and $n(x)$ are the concentrations of the positive ions and the negative ions in the bulk solvent respectively. The constant coefficients $D_p$ and $D_n$ are the diffusion coefficients of the positive ions and the negative ions respectively, $K_BT$ is the Boltzmann energy constant, $e$ is the charge for one electron and
 $\varepsilon = \left\{
   \begin{array}{l}
             2\varepsilon_0, \ \ \mbox{in} \ \Omega_m,  \\
             80\varepsilon_0, \ \mbox{in} \ \Omega_s,
   \end{array} \right. $
 is the dielectric permittivity coefficient, where $\varepsilon_0$ is the dielectric constant of vacuum.

 Suppose $\Gamma_1$ and $\Gamma_2$ are the interfaces, where $\Gamma_1$ is the boundaries of membranes, $\Gamma_2$ is the boundaries of protein exposed to solvent, $\Gamma_3$ is the outside boundaries of $\Omega_s$ and the boundaries of the whole domain are denoted by $\partial\Omega$.  The meshes of the simulation box and boundaries are shown in Fig. \ref{ion-mesh2D}.

%%%%%%%%%离子通道网格示意图所在位置%%%%%%%%%%%%%%%
\begin{figure}[H]
 \centerline{
 ~~~~~~~~~~~~~~~~~\includegraphics[height=5.5cm, width=13.0cm]{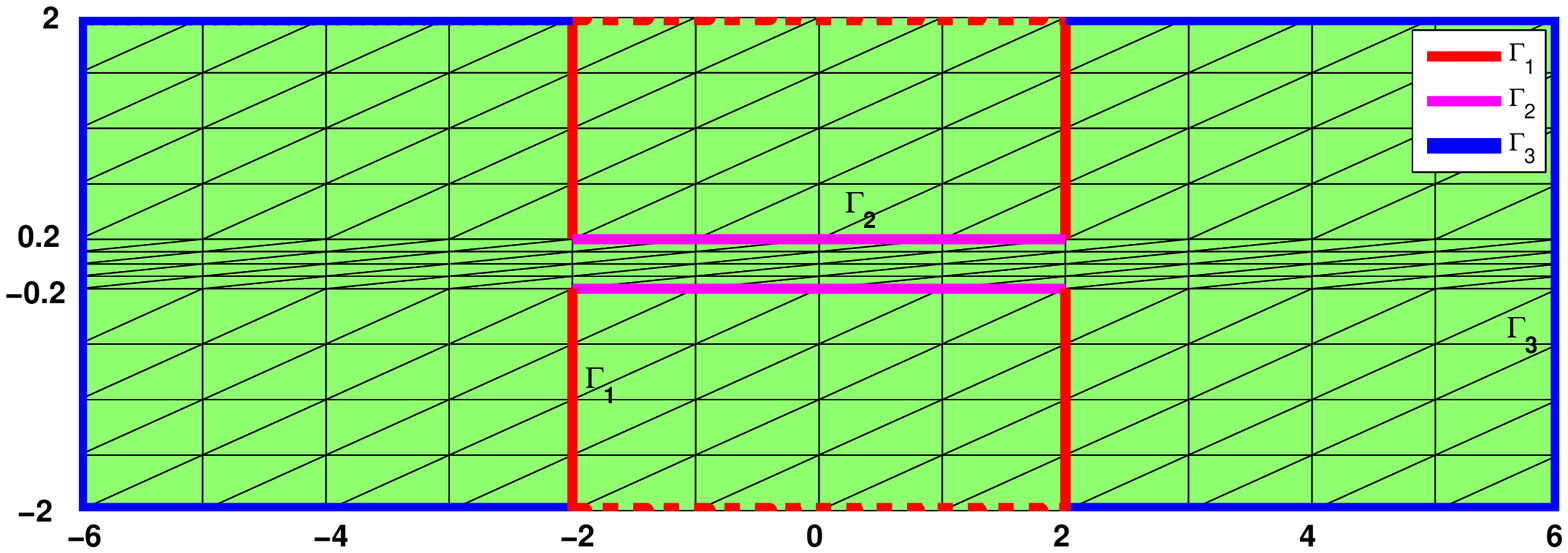} }
 \caption{ The meshes of the simulation box and boundaries of $\Omega_s$. The red solid lines, $\Gamma_1$, are the boundaries of membranes, the magenta lines, $\Gamma_2$, are the boundaries of protein exposed to solvent, and the blue lines, $\Gamma_3$, are the outside boundaries of $\Omega_s$. }
 \label{ion-mesh2D}
\end{figure}

 Then the boundary and initial conditions are described as follows
 \begin{eqnarray} \label{ion-initial-bd}
  \left\{\begin{array} {l}
   (D_p\nabla p+\frac{D_p e}{K_BT} p\nabla\phi)\cdot\nu = 0, \quad~  \mbox{on} \ \Gamma_1\cup \Gamma_2, \\
   (D_n\nabla n-\frac{D_n e}{K_BT} n\nabla\phi)\cdot\nu = 0, \quad  \mbox{on} \ \Gamma_1\cup \Gamma_2, \\
   p=p_\infty, ~ n=n_\infty, \quad ~~~~~~~~~~~~~~ \mbox{on} \ \Gamma_3,\\
   p(\cdot,0) = p_\infty,~ n(\cdot,0)= n_\infty, \\
   {[}\varepsilon\nabla\phi{]}=\rho_1, \quad ~~~~~~~~~~~~~~~~~~~~~ \ \mbox{on}\ \Gamma_1,\\
   {[}\varepsilon\nabla\phi{]}=\rho_2,  \quad~~~~~~~~~~~~~~~~~~~~~~ \mbox{on} \ \Gamma_2, \\
   \phi(x,t)=-\delta V x/L.  \quad~~~~~~~~~~~~~ \mbox{on} \ \partial\Omega,
  \end{array} \right.
 \end{eqnarray}
 where $\nu$ is the exterior unit normal with direction from solvent region to macromolecule part on the boundary, $\rho_1$ and $\rho_2$ are the charge densities on the surface of membranes and protein respectively, $\delta V$ is the voltage difference between the left and right edges of the box along $x$ direction, $L$ is the length of the simulation box, $p_\infty$ and $n_\infty$ are the initial-boundary charge densities.

 This example uses the similar setup as the model presented in \cite{S. Xu 2014}. Suppose $\Omega=\Omega_s\cup \Omega_m =[x,y]=[-6,6]\times [-2,2]$,  $\Omega_m=[-2,2]\times[-2,-0,2]\cup[-2,2]\times[0.2,2]$ denotes the solute region, $\Omega_I=[-2,2]\times[-0.2,0,2]$ is the ion channel region and $\Omega_s\setminus\Omega_I=[-6,-2]\times[-2,2]\cup [2,6]\times[-2,2]$ represents the solvent region excluding $\Omega_I$.
 %$\Omega_s=[-6,-2]\times[-2,2]\cup [2,6]\times[-2,2]$ denotes the solvent region, $\Omega_m=[-2,2]\times[-2,-0,2]\cup[-2,2]\times[0.2,2]$ denotes the solute region and $\Omega_I=[-2,2]\times[-0.2,0,2]$ is the ion channel.
 In our computations, the values of the parameters mentioned above are reported in Table \ref{ion-pnp-parameters}.

\begin{table}[H]
 \centering
 \caption{ The parameters for PNP equations
          \eqref{ion-pnp}-\eqref{ion-initial-bd} }
  \small
 \begin{tabular}{p{3.6cm}p{3.2cm}p{3.8cm}p{3.5cm}}
 \toprule %[1pt]
  Variables   & Values     & Variables      & Values  \\
  \midrule %[0.5pt]
  Diffusion coefficient: $D_p$  & $2.0561\times10^{-9}m^2/s$ & Initial density: $p_\infty(n_\infty)$ & $6.02\times 10^{25}g/m^3$    \\
  Diffusion coefficient: $D_n$  & $2.0321\times10^{-9}m^2/s$ & Length of the box: $L$ & $1.0\times 10^{-10}m$ \\
  Boltzmann energy: $K_BT$      & $4.14\times 10^{-21} J$    & Permittivity of vacuum: $\varepsilon_0$ & $8.85\times 10^{-12}C^2/(N\cdot m^2)$      \\
  Elementary charge: $e$        & $1.6\times10^{-19} C$      &        \\
 \bottomrule %[1pt]
 \end{tabular} \label{ion-pnp-parameters}
\end{table}

 In order to compute the finite element solution of \eqref{ion-pnp}, we first give the weak formulation as follows.
 Let ${V_0} = \{ \upsilon {\left| {\upsilon  \in H} \right.^1}({\Omega _s}),\upsilon \left| {_{\partial \Omega }} \right. = 0)$ and ${V_{0,{\Gamma _3}}} = \{ \upsilon {\left| {\upsilon  \in H} \right.^1}({\Omega _s}),\upsilon \left|_{\Gamma _3} \right. = 0)$.
 Find $\phi  \in {L^2}(0,T;V),~p,n\in {L^2}(0,T;{V_{{\Gamma _3}}}) \cap {L^\infty }({\Omega _T})$ such that
 \begin{eqnarray} \label{ion-w1}
   {\big(\frac{{\partial p}}{{\partial t}},\upsilon \big)_{{\Omega _s}}} + {\big({D_p}\nabla p + D_p\frac{e}{K_BT}p\nabla \phi ,\nabla \upsilon \big)_{{\Omega _s}}} = 0,\quad \forall \upsilon  \in {V_{0,\Gamma }}_{_3}, \\  \label{ion-w2}
   {\big(\frac{{\partial n}}{{\partial t}},\upsilon \big)_{{\Omega _s}}} + {\big({D_n}\nabla n - D_n\frac{e}{K_BT}n\nabla \phi ,\nabla \upsilon \big)_{{\Omega _s}}} = 0,\quad\forall \upsilon  \in {V_{0,{\Gamma _3}}}, \\   \label{ion-w3}
   {\big(\varepsilon \nabla \phi ,\nabla w \big)_\Omega }
   = {\big((p - n)e,w\big)_\Omega}+ {\big([\varepsilon\nabla\phi],w\big)_{\Gamma_1\cup\Gamma_2},\quad\forall w\in {V_0}}.
 \end{eqnarray}

 In this computation, the implicit Euler scheme is used for the time discretization with time step $\tau$. We set the final time $T=1.0$ and choose the time step $\tau= 0.1$. The bulk densities of CsCl solution is $0.1M$ and the voltage difference $\delta V = 8V$. The edge average finite element method (EAFEM) \cite{J.Xu-L.Z 1999} is used in our calculation to solve the density equations. For the charge distributions $p$ and $n$, the piecewise linear element is used on the triangulation of domain $\Omega_s$ and the second order isoparametric finite element (cf. \cite{P.G.Ciarlet 1978}) is used for the potential. The finite element approximation $(P_h, N_h, \Phi_h)$ satisfies
 \begin{eqnarray} \label{ion-wh1}
   {\Big(\frac{P_h^{n+1}-P_h^n}{\tau},\upsilon_h \Big)_{{\Omega_s}}} + {\Big({D_p}\nabla P_h^{n+1} + D_p\frac{e}{K_BT}P_h^{n+1}\nabla\Phi_h^{n+1},\nabla\upsilon_h \Big)_{{\Omega_s}}} = 0, \\
   \label{ion-wh2}
   {\Big(\frac{N_h^{n+1}-N_h^n}{\tau},\upsilon_h \Big)_{{\Omega _s}}} + {\Big({D_n}\nabla N_h^{n+1} - D_n\frac{e}{K_BT}N_h^{n+1}\nabla\Phi_h^{n+1} ,\nabla\upsilon_h \Big)_{{\Omega _s}}} = 0, \\
   \label{ion-wh3}
   {\Big(\varepsilon\nabla\Phi_h^{n+1} ,\nabla w_h \Big)_\Omega }
   = {\Big((P_h^{n+1} - N_h^{n+1})e,w_h\Big)_\Omega}+ {\Big([\varepsilon\nabla\Phi_h^{n+1}],w_h\Big)_{\Gamma_1\cup\Gamma_2}}.
 \end{eqnarray}

 To illustrate the efficiency and effectiveness of the two-grid method for the ion channel problem, we first obtain the finite element solution of \eqref{ion-wh1}-\eqref{ion-wh3} by using EAFEM combined with the Gummel iteration. Then Algorithm \ref{al-FI} are used to solve \eqref{ion-wh1}-\eqref{ion-wh3} to get the two-grid solution $(P_h^*, N_h^*, \Phi_h^*)$. Both the accuracy of these two solutions and the CPU time costs of the two methods are compared.
 %solve the coupled system \eqref{ion-wh1}-\eqref{ion-wh3} by EAFEM combined the Gummel iteration as \eqref{gummel} to obtain the the finite element solution, then solve the two-grid solutions $(P_h^*, N_h^*, \Phi_h^*)$ by Algorithm \ref{al-FI}.

All the computations are implemented on quasiuniform triangular meshes, see e.g. Fig. \ref{ion-mesh2D}. To obtain the convergence rate, we refine the initial mesh step by step uniformly in the solvent region $\Omega_s$ and the solute region $\Omega_m$, respectively. Since Example 5.2 is a problem without an analytic solution, we choose the finite element solution with the degrees of freedom $K = 115713$ as ``the exact solution" for the charge distributions $p,~n$, and the finite element solution with the degrees of freedom $K = 148225 $ as ``the exact solution" for the potential $\phi$, since they are defined in different domains.

 %the purpose of the numerical experiment is to verify the validity of the two-grid algorithm for the ion channel problem.
% All the computations are implemented on the uniform triangular meshes. In the solvent region $\Omega_s$, the channel region $\Omega_I$ and the solute region $\Omega_m$, the regions are uniformly divided into triangular meshes.
% In the following experiment, we illustrate the convergence rate in terms of the order of the degrees of freedom $K$ instead of that of mesh size $h$. To obtain the convergence rate, we refine the initial mesh step by step uniformly in the the solvent region $\Omega_s$, the channel region $\Omega_I$ and the solute region $\Omega_m$, respectively.  We choose the finite element solution with the degrees of freedom $K = 115713$ as "the exact solution" for the charge distributions $p,~n$, and the finite element solution with the degrees of freedom $K = 148225 $ as "the exact solution" for the potential $\phi$, since they are defined in different domains.

 Here, we first define the discrete $L^2$ norm as follows:
 $$\|e\|_{L^2}=\sqrt{\frac{1}{K}\sum\limits_{i=1}^K |e_i|^2}, $$
 where $e = (e_1, e_2, \cdots, e_K)^T$. Denote $K_H$ and $K_h$ are the degrees of freedom on the coarse grid and the fine grid, respectively. The numerical results for the finite element solutions and the two-grid solutions are shown in Table \ref{ion-channel-ph-nh}--\ref{ion-CPU_time}. First compared Table \ref{ion-channel-ph-nh} with Table \ref{ion-channel-Tp-Tn}, the results show that the two-grid solutions have the similar order of accuracy as the finite element solutions for both the charge distributions $p, n$ and the electrostatic potential $\phi$, which indicates that this decoupling method is efficient for the PNP system describing the ion channel. Second, as shown in Table \ref{ion-CPU_time}, the CPU time cost by Algorithm \ref{al-FI} is much less than that of EAFEM as the degree of freedom becomes large, which indicates the efficiency of Algorithm \ref{al-FI}.  We also note that the accuracy of order in Table \ref{ion-channel-ph-nh} or \ref{ion-channel-Tp-Tn} is not so good as that in Example 5.1, since there are some charges on the interface of membranes which leads to the singularity of the solution for the PNP system in this example. The results can be improved if a better mesh could be used. We shall study the two-grid method on the ununiform meshes such as the adaptive mesh in our further work.

 %The numerical results shown in Table \ref{ion-channel-ph-nh}--\ref{ion-CPU_time} for the finite element solutions by EAFEM combined with Gummel iteration and the two-grid solutions of Algorithm \ref{al-FI}. Compared Table \ref{ion-channel-ph-nh} with Table \ref{ion-channel-Tp-Tn}, the results show that the two-grid solutions have the similar order of accuracy as the finite element solutions for both the charge distributions $p, n$ and the electrostatic potential $\phi$.  The CPU time costs by EAFEM and Algorithm \ref{al-FI} are displayed in Table \ref{ion-CPU_time}, which indicates the efficiency of the two-grid algorithm for the ion channel problem.

\begin{table}[H]
 \centering
 \caption{ $L^2$ error of the EAFEM for $P_h, N_h$ and $\Phi_h$ }
 \begin{tabular}{p{1.51cm}p{2.13cm}p{1.5cm}p{2.15cm}p{0.9cm}|p{1.51cm}p{2.13cm}p{.7cm}}
 %\toprule %[1pt]
 \hline
    $K_h$ & $\|P_h-p\|_{L^2}$ & Order & $\|N_h-n\|_{L^2}$  & Order & $K_h$ &$\|\Phi_h-\phi\|_{L^2}$ &order \\
  %\midrule %[0.5pt]
  \hline
  45   & 0.0288 & $-$   &10.6052 & $-$   &  49   & 5.7787 & $-$\\
  145  & 0.0443 &-0.736 & 8.1898 & 0.442 &  169  & 4.3420 & 0.462    \\
  513  & 0.0407 & 0.134 & 7.2571 & 0.191 &  625  & 3.2902 & 0.424    \\
  1921 & 0.0410 &-0.011 & 5.3555 & 0.460 &  2401 & 2.1120 & 0.659    \\
  7425 & 0.0262 & 0.662 & 3.3710 & 0.685 &  9409 & 1.2527 & 0.765    \\
  29185& 0.0098 & 1.437 & 1.2608 & 1.437 &  37249& 0.4599 & 1.457    \\
 %\bottomrule %[1pt]
 \hline
 \end{tabular} \label{ion-channel-ph-nh}
\end{table}

\begin{table}[H]
 \centering
 \caption{ $L^2$ error of the two-grid method for $P_h^*, N_h^*$ and $\Phi_h^*$ }
 \begin{tabular}{p{0.95cm}p{1.cm}p{1.6cm}p{1.cm}p{1.8cm}p{0.9cm}|p{0.95cm}p{1.cm}p{1.8cm}p{.7cm}}
 %\toprule %[1pt]
 \hline
  $K_H$ &$K_h$ &$\|P_h^*-p\|_{L^2}$ &Order& $\|N_h^*-n\|_{L^2}$ & Order& $K_H$ &$K_h$ &$\|\Phi_h^*-\phi\|_{L^2}$ &order \\
  %\midrule %[0.5pt]
  \hline
  16   &45    &0.0252 & $-$   &10.6769 & $-$   & 16   &49    & 5.8649 & $-$     \\
  45   &145   &0.0437 &-0.941 & 8.2322 & 0.444 & 49   &169   & 4.4391 & 0.450   \\
  145  &513   &0.0402 & 0.132 & 7.2697 & 0.197 & 169  &625   & 3.3548 & 0.428   \\
  513  &1921  &0.0409 &-0.026 & 5.3668 & 0.460 & 625  &2401  & 2.1697 & 0.648    \\
  1921 &7425  &0.0266 & 0.636 & 3.3843 & 0.682 & 2401 &9409  & 1.2930 & 0.757   \\
  7425 &29185 &0.0103 & 1.386 & 1.2672 & 1.435 & 9409 &37249 & 0.4930 & 1.403   \\
 %\bottomrule %[1pt]
 \hline
 \end{tabular} \label{ion-channel-Tp-Tn}
\end{table}

\begin{table}[H]
 \parbox{.355\textwidth}{
 \caption{ The CPU time (second) of the EAFEM and two-grid method (Example 5.2) }  \label{ion-CPU_time}   }
 \hspace{\fill}
  \parbox{.58\textwidth}{
  \centering
  \begin{tabular} {p{2.5cm}p{3cm}p{2.5cm}}
  \toprule %[1pt]
  $K_h$ & \tabincell{l}{EAFEM} &
  \tabincell{l}{ two-grid method \\ (Algorithm \ref{al-FI} )} \\
  \midrule %[0.5pt]
  49    & 2.499    & 2.812    \\
  169   & 4.298    & 4.313    \\
  625   & 11.830   & 9.173    \\
  2401  & 47.584   & 31.581   \\
  9409  & 211.146  & 133.373   \\
  37249 & 1060.000 & 631.299   \\
  \bottomrule %[1pt]
  \end{tabular} }
\end{table}

\section{Conclusion} \label{sec-conclusion}

\noindent
 \par In this paper, we first give the optimal error estimate in $L^2$ norm with linear element for both semi- and fully discrete finite element approximation for the time-dependent Poisson-Nernst-Planck equations. Then the decoupling two-grid finite element algorithms are proposed for the time-dependent Poisson-Nernst-Planck equations. The optimal error estimates are obtained  for the electrostatic potential and the concentrations in $H^1$ norm. The numerical experiments show that the two-grid algorithms remain the same order of accuracy but cost much less computational time compared with the finite element method combined with the Gummel iteration. It is promising to extend this method to more complex PNP models, such as PNP equations for three dimensional ion channel and semiconductor devices, as well as modified PNP equations with size effects. \\
\\

  %Finally, we remark that we have only considered the case of space two-grid finite element and a first-order fully discrete in time situation for the time-dependent PNP equations. We shall continue to derive the space-time two-grid finite element method with high-order schemes, for example, the Crank-Nicolson scheme,  in our future work.  \\
%  \\

\noindent
 {\bf Acknowledgement }
 The authors would like to thank Dr. Chunshen Feng and Dr. Shixin Xu for their  valuable discussions on numerical experiments. S. Shu was supported by the China NSF (NSFC 11571293). Y. Yang was supported by the China NSF (NSFC 11561016, NSFC 11661027, NSFC 11561015), Guangxi Colleges and Universities Key Laboratory of Data Analysis and Computation open fund and Guangxi Key Laboratory of Cryptogriaphy and information Security. B. Z. Lu was supported by Science Challenge Program under grant number TZ2016003, and China NSF (NSFC 21573274, 11771435). R. G. Shen was supported by Postgraduate Scientific Research and Innovation Fund of the Hunan Provincial Education Department (CX2017B268).   \\

%\begin{thebibliography} {999}  % 大括号里数字个数改变缩进数

\end{document}